\theoremstyle{plain}
\theoremstyle{plain}
\newtheorem{thm}{Theorem}
\newtheorem{theorem}{Theorem}
\newtheorem{proposition}[thm]{Proposition}
\newtheorem{lemma}{Lemma}
\newtheorem{remark}{Remark}
\newcommand{\R}{\Bbb{R}}
\newcommand {\Z} {\mathbb Z}
\numberwithin{equation}{section}
\newcommand \norm[1] {\left \lVert #1\right \rVert}
\newcommand {\lb} {\left(}
\newcommand {\rb} {\right)}
\newcommand \lsb {\left[}
\newcommand \rsb {\right]}
\newcommand{\eps}{\varepsilon}
\newcommand {\p} [1]{\partial_{#1}}
\newcommand {\solsp}[3] {C^{#1}\lb \lsb -#2,#2\rsb,H^{#3}\rb}
\newcommand \elt[1] {\norm{#1}_{\ell^2}}
\newcommand \Li[1]{\norm{#1}_{L^\infty}}
\newcommand {\uh} [2] {\hat u \left( #1, #2 \right)}
\newcommand \jb[1] {\left\langle #1 \right\rangle }
\newcommand \rint {\int_{-\infty}^\infty}
\newcommand \sint [1]{\int_{-#1}^{#1}}
\newcommand \ux[2] {u^{(1)}_{#1,#2}}
\newcommand \uy[2] {u^{(2)}_{#1,#2}}
\newcommand \vx[2] {v^{(1)}_{#1,#2}}
\newcommand \vy[2] {v^{(2)}_{#1,#2}}
\newcommand \Ux[2] {U^{(1)}_{#1,#2}}
\newcommand \Uy[2] {U^{(2)}_{#1,#2}}
\newcommand \Vx[2] {V^{(1)}_{#1,#2}}
\newcommand \Vy[2] {V^{(2)}_{#1,#2}}
\newcommand \Up[1] {
	\IfEqCase{#1}{
		{0}{A}
		{1}{A_0}
		{2}{A_1}
	}
}
\newcommand \At {A\lb \xi, \eta, \tau\rb}
\newcommand \xe[2] {\uppercase{#1}\lb \xi#2\eps,\eta\rb}
\newcommand \ye[2] {\uppercase{#1}\lb \xi,\eta#2\eps^2\rb}
\newcommand \We [1] {W^{(#1)}}
\newcommand \Ue [1] {U^{(#1)}}
\newcommand \Le [1] {L^{(#1)}}
\newcommand \UV[2] {#1^{(#2)}}
\newcommand {\ds}{\displaystyle\sum}
\newcommand \av[3]{a^{#1}_{#2,#3}}
\newcommand \efun [1]{#1_\eps}
\newcommand {\pxi} [1] {\p \xi^{-#1}}
\begin{document}

\title[Justification of KP-II for two-dimensional FPU systems]{Justification of the KP-II approximation\\
	 in dynamics of two-dimensional FPU systems}

\author{Nikolay Hristov}
\address{Department of Mathematics and Statistics, McMaster University, Hamilton, Ontario, Canada, L8S 4K1}

\author{Dmitry E. Pelinovsky}
\address{Department of Mathematics and Statistics, McMaster University, Hamilton, Ontario, Canada, L8S 4K1}
\email{dmpeli@math.mcmaster.ca}

\begin{abstract}
	Dynamics of the Fermi--Pasta--Ulam (FPU) system on a two-dimensional square lattice is considered in the limit of small-amplitude long-scale waves with slow transverse modulations. In the absence of transverse modulations, dynamics of such waves, even at an oblique angle with respect to the square lattice, is known to be described by the Korteweg--de Vries (KdV) equation. For the three basic directions (horizontal, vertical, and diagonal), we prove that the modulated waves are well described by the Kadomtsev--Petviashvili (KP-II) equation. The result was expected long ago but proving rigorous bounds on the approximation error turns out to be complicated due to the nonlocal terms of the KP-II equation and the vector structure of the FPU systems on two-dimensional lattices. We have obtained these error bounds by extending the local well-posedness result for the KP-II equation in Sobolev spaces and by controlling the error terms with energy estimates. The bounds are useful in the analysis of transverse stability of solitary and periodic waves in two-dimensional FPU systems due to many results available for the KP-II equation. 
\end{abstract}

\maketitle
\date{}

\section{Introduction}

A Fermi-Pasta-Ulam (FPU) system is formed by particles connected
to their nearest neighbours by nonlinear springs. If the particles are 
organized in a one-dimensional chain, we can label the position variable 
of the $j$-th particle by $q_j$. If we 
denote the corresponding velocity variable by $p_j$, then the FPU system is generated by the Hamiltonian function in the form 
\begin{equation}
\label{Hamiltonian-1D}
H(p,q) = \sum_{j \in \mathbb{Z}} \frac{1}{2} p_j^2 + V(q_{j+1}-q_j),
\end{equation}
where $V(q)$ is the potential energy. A useful approximation 
to dynamics of small-amplitude long-scale waves in the FPU chain
with smooth $V$ satisfying $V'(0) = 0$, $V''(0) > 0$, and $V'''(0) \neq 0$ 
is given by the Korteweg--de Vries (KdV) equation, which is a remarkable 
model due to integrability, stability of periodic and solitary waves, and global existence of solutions in Sobolev spaces (see \cite{KdV-review} for review).

Bounds on the approximation error between solutions of the FPU system and the KdV equation were obtained by G. Schneider and C.E. Wayne 
in a conference proceeding \cite{SW1} as an exercise related to 
the justification technique the
authors had developed for the water wave problem \cite{SW2}. 
The same approximation appeared to be very useful in the context of 
stability of FPU solitary waves and was studied comprehensively
in the series of papers by G. Friesecke and R. L. Pego \cite{FP1,FP2,FP3,FP4}. 
It was also justified in the context of normal forms and KAM theory 
for metastability and recurrency of the FPU systems \cite{BP,Ponno}.

More recently, the same approximation but with other 
versions of the KdV equation was justified for FPU systems with Hertzian potentials \cite{DP} and with pure anharmonic powers \cite{Khan}. In \cite{Khan}, the KdV approximation was extended to logarithmically long time scales provided the global dynamics of the generalized KdV equation is well defined in Sobolev spaces of higher regularity. 
The KdV approximation was also used in the context of periodic 
waves and for FPU systems with nonlocal interactions \cite{FM,Herr1}. 
Higher-order corrections to the KdV approximation were studied in \cite{GPR}, 
where it was shown that the second-order corrections are spanned by 
two members of the integrable KdV hierarchy  whereas the third-order corrections 
can only be spanned by three members of the KdV hierarchy under a constraint on parameters of the FPU system. Review of results on nonlinear waves in one-dimensional FPU chains can be found in \cite{Vainshtein}.

{\em The purpose of this work is to consider dynamics of the two-dimensional FPU systems and to justify the two-dimensional KdV approximation given by the 
Kadomtsev--Petviashvili (KP-II) equation.} Similar to the KdV equation, 
the KP-II equation is remarkable due to integrability \cite{KP-book}, stability of periodic and solitary waves \cite{Har,Miz}, and global existence of solutions in Sobolev spaces \cite{Hadac,Molinet}.  

The first formal derivation of the KP-II equation was performed in \cite{Z} for the scalar extension of the two-dimensional FPU systems with the total energy of the form
\begin{equation}
\label{Hamiltonian-2D}
H = \sum_{(j,k) \in \mathbb{Z}^2} \frac{1}{2} p_{j,k}^2 + 
\frac{1}{2} (q_{j+1,k}-q_{j,k})^2 + \frac{1}{2} \eps^2 (q_{j,k+1} - q_{j,k})^2 + \frac{1}{3} \alpha (q_{j+1,k} - q_{j,k})^3,
\end{equation}
where $\eps^2$ is a small parameter of slow transverse modulations 
in the $k$-direction of the dominant wave propagating in the $j$ direction and $\alpha$ is the parameter for the cubic interaction potential. A similar scalar model was considered in the recent work \cite{PP21}, where the KP-II equation in the periodic domain was rigorously justified (among other integrable models) as the normal form for metastability phenomenon in two-dimensional rectangular lattices. 
Existence of two-dimensional solitary waves \cite{W1} and breathers (space-localized and time-periodic solutions) \cite{W2,W3} was also studied 
in the scalar two-dimensional FPU lattices. Extensions of the scalar two-dimensional FPU models that include harmonic interactions between the first and second nearest neighbors were considered in \cite{Astakhova,Ioan}. 
Applications of the scalar FPU models to the two-dimensional square-packed granular arrays were studied numerically and experimentally in \cite{LFD}, where propagation of a modulated one-dimensional solitary wave was observed. 
Periodic triangular lattices were compared with square lattices 
in the numerical study in \cite{B,BG}, where it was shown that non-square lattices do not have integrable approximations leading to the energy localization phenomenon.

The scalar problem can not be realized mechanically in the dynamics of 
particles organized in a two-dimensional square lattice and connected by the nonlinear springs. On the other hand, in the context of mechanical models of the elasticity theory, the long-wave reductions were used to derive the one-dimensional KdV equation (rather than the two-dimensional KP-II equation) 
from the vector two-dimensional FPU systems. 

A two-dimensional square FPU lattice was considered in \cite{FM-2003} with additional springs along the diagonals and a pair of potential functions, one for the horizontal and vertical displacements and the other one for the diagonal displacements. Existence of small-amplitude supersonic longitudinal solitary waves propagating along the horizontal direction was proven in \cite{FM-2003}. Surprisingly, the result holds even when the potential functions are quadratic, 
due to the geometric nonlinearity of the lattice. Nonlinear couplings were included in the consideration of the same model in \cite{CH}, where the solitary wave propagated under an arbitrary angle with respect to the square lattice. 

Another work can be found in \cite{ST}, where propagation 
of rings in two-dimensional lattices was analyzed in the linear approximation 
and compared rigorously with the approximation of the linearized KdV equation (rather than with the linearized KP-II equation). 

{\em Compared to the previous works, we prove validity of the KP-II approximation for dynamics of transversely modulated small-amplitude long-scale waves in a vector FPU system on a two-dimensional square lattice.}

Next we describe the mechanical model considered here. Each 
particle at the $(j,k)$ site of the two-dimensional square lattice is characterized by the vectors of relative displacements $(x_{j,k}, y_{j,k})$ and relative velocities $(\dot{x}_{j,k},\dot{y}_{j,k})$. The nonlinear springs 
connecting the particles are shown on Figure \ref{fig:2Dlat}.

\begin{figure}[H]
	\centering
	\begin{tikzpicture}[
	wall/.style = {gray,fill=gray},
	mass/.style = {draw,circle,ball color=red},
	spring/.style = {decorate,decoration={zigzag, pre length=.3cm,post length=.3cm,segment length=#1}},
	]
	\coordinate (l1) at (0,2);
	\node[mass,label={[xshift=0.9cm, yshift=0.05cm]$_{j-1,k-1}$}] (m11) at (2,2) {};
	\node[mass,label={[xshift=0.9cm, yshift=0.05cm]$_{j,k-1}$}] (m21) at (4,2) {};
	\node[mass,label={[xshift=0.9cm, yshift=0.05cm]$_{j+1,k-1}$}] (m31) at (6,2) {};
	\coordinate (r1) at (8,2);
	
	\coordinate (l2) at (0,4);
	\node[mass,label={[xshift=0.9cm, yshift=0.05cm]$_{j-1,k}$}] (m12) at (2,4) {};
	\node[mass,label={[xshift=0.9cm, yshift=0.05cm]$_{j,k}$}] (m22) at (4,4) {};
	\node[mass,label={[xshift=0.9cm, yshift=0.05cm]$_{j+1,k}$}] (m32) at (6,4) {};
	\coordinate (r2) at (8,4);
	
	\coordinate (l3) at (0,6);
	\node[mass,label={[xshift=0.9cm, yshift=0.05cm]$_{j-1,k+1}$}] (m13) at (2,6) {};
	\node[mass,label={[xshift=0.9cm, yshift=0.05cm]$_{j,k+1}$}] (m23) at (4,6) {};
	\node[mass,label={[xshift=0.9cm, yshift=0.05cm]$_{j+1,k+1}$}] (m33) at (6,6) {};
	\coordinate (r3) at (8,6);

	\draw[spring=4pt] (l1) -- node[above] {} (m11);
	\draw[spring=4pt] (m11) -- node[above] {} (m21);
	\draw[spring=4pt] (m21) -- node[above] {} (m31);
	\draw[spring=4pt] (m31) -- node[above] {} (r1);
	
	\draw[spring=4pt] (l2) -- node[above] {} (m12);
	\draw[spring=4pt] (m12) -- node[above] {} (m22);
	\draw[spring=4pt] (m22) -- node[above] {} (m32);
	\draw[spring=4pt] (m32) -- node[above] {} (r2);
	
	\draw[spring=4pt] (l3) -- node[above] {} (m13);
	\draw[spring=4pt] (m13) -- node[above] {} (m23);
	\draw[spring=4pt] (m23) -- node[above] {} (m33);
	\draw[spring=4pt] (m33) -- node[above] {} (r3);
	
	\draw[spring=4pt] (2,8) -- node[above] {} (m13);
	\draw[spring=4pt] (m13) -- node[above] {} (m12);
	\draw[spring=4pt] (m12) -- node[above] {} (m11);
	\draw[spring=4pt] (m11) -- node[above] {} (2,0);
	
	\draw[spring=4pt] (4,8) -- node[above] {} (m23);
	\draw[spring=4pt] (m23) -- node[above] {} (m22);
	\draw[spring=4pt] (m22) -- node[above] {} (m21);
	\draw[spring=4pt] (m21) -- node[above] {} (4,0);
	
	\draw[spring=4pt] (6,8) -- node[above] {} (m33);
	\draw[spring=4pt] (m33) -- node[above] {} (m32);
	\draw[spring=4pt] (m32) -- node[above] {} (m31);
	\draw[spring=4pt] (m31) -- node[above] {} (6,0);
	
	\end{tikzpicture}
	\caption{A mass--spring mechanical system arranged in a square lattice}\label{fig:2Dlat}
\end{figure}

The potential energy of a single spring between two particles in the horizontal direction is defined by 
$$
V(r,s) = \frac{1}{2} (c_1^2 r^2 + c_2^2 s^2) + \frac{1}{3} \alpha_1 r^3 + \frac{1}{2} \alpha_2 r s^2,
$$
where $(r,s)$ are the relative displacements of the two particles in the $(x,y)$ coordinates, $(c_1^2,c_2^2)$ are some coefficients of the quadratic interaction potential, and $(\alpha_1,\alpha_2)$ are some  coefficients of the cubic interaction potential. The cubic terms are chosen in such a way that the  potential energy of the horizontal spring is symmetric with respect to the sign of vertical relative displacements of the particles. Due to the symmetry between horizontal and vertical springs, the total energy of the two-dimensional FPU lattice takes the form 
\begin{align}
\nonumber
H &= \frac{1}{2} \sum_{(j,k) \in \mathbb{Z}^2} \dot{x}_{j,k}^2 + \dot{y}_{j,k}^2 \\
\label{Hamiltonian}
& + \sum_{(j,k) \in \mathbb{Z}^2} V(x_{j+1,k}-x_{j,k},y_{j+1,k}-y_{j,k}) + V(y_{j,k+1}-y_{j,k},x_{j,k+1}-x_{j,k}).
\end{align}
As is discussed in \cite{FT-2002}, the model with horizontal and vertical springs may not capture all properties of elastic materials and diagonal springs are  required to describe structural stability of some materials. The KP-II approximation in the square lattices with additional diagonal springs should be possible, but computations of coefficients will become more complicated. 

Next we present the main result for the propagation of nonlinear waves in the horizontal direction. We will seek a continuous approximating function of the form 
\begin{align}\label{KPII-scaling}
x_{j,k}=\eps X\lb \xi,\eta,\tau\rb+\text{error},
\end{align}
with $\xi=\eps(j-c_1t)$, $\eta=\eps^2k$, and $\tau=\eps^3 t$. We will show that $x_{j,k}$ satisfy the equations of motion with small error controllable in $\ell^2(\mathbb{Z}^2)$ if $X\lb\xi,\eta,\tau\rb$ solves the following KP-II equation 
\begin{align}\label{KPII-formal}
2c_1\partial_\xi\partial_{\tau} X + \frac {c_1^2} {12} \partial_\xi^4 X  + 2 \alpha_1 \lb \partial_\xi X\rb \lb \partial^2_\xi X\rb  + c_2^2\partial_\eta^2 X=0.
\end{align}
For justification analysis, it is more convenient to solve the FPU system in strain variables,
\begin{align}
\label{strain-variable}
\left\{ \begin{array}{l}
u_{j,k}^{(1)} :=x_{j+1,k}-x_{j,k},\\
u_{j,k}^{(2)} :=x_{j,k+1}-x_{j,k},\\
v_{j,k}^{(1)} :=y_{j+1,k}-y_{j,k},\\
v_{j,k}^{(2)} :=y_{j,k+1}-y_{j,k}, \end{array} \right.
\end{align}
which are defined by the relative displacements between adjacent particles, 
and introduce the amplitude function $A\lb \xi,\eta,\tau\rb$ in the form
\begin{align}\label{KPII-scaling-again}
x_{j+1,k}-x_{j,k}=\eps^2 A\lb \xi,\eta,\tau\rb + \text{error}.
\end{align}
The reason for the different scaling in (\ref{KPII-scaling}) and (\ref{KPII-scaling-again}) is that we can formally consider the relationship between the function $A$ and $X$ through a Taylor expansion 
\begin{equation}
\label{correspondence}
A \lb \xi,\eta,\tau\rb = \p \xi X \lb \xi,\eta,\tau\rb + \mathcal{O}(\eps),
\end{equation} 
so that the KP-II equation \eqref{KPII-formal} can be rewritten in the form 
\begin{align}\label{KPII}
2c_1 \p \xi \p \tau A + \frac{c_1^2}{12} \p \xi^4 A + 2 \alpha_1 \p \xi \lb A \p \xi A\rb+c_2^2 \p \eta^2 A = 0,
\end{align}
Associated with a smooth solution to the KP--II equation (\ref{KPII}) at a given time $\tau$, for which both $A\in H^{s}\lb \R^2\rb$ and $\p \xi^{-1} A\in H^{s}\lb \R^2\rb$ with sufficiently large $s$, 
we define the anti-derivative by
\[
\p \xi^{-1}A (\xi,\eta) := \int_{-\infty}^\xi A(\xi',\eta) d\xi'.
\]
The solution to the KP-II equation (\ref{KPII}) is required to have enough regularity so that $\p \xi^{-1} \p \tau^2 A\in C^0\lb \lsb -\tau_0, \tau_0 \rsb,H^{s}\lb \R^2\rb\rb$ and  $\p \xi^{-2} \p \eta^2 \p \tau A\in C^0\lb \lsb -\tau_0, \tau_0 \rsb,H^{s}\lb \R^2\rb\rb$ with $s \geq 3$. Existence of such solutions is proven in Lemma \ref{welpos} (Section \ref{sec-prel}). Since $A(\eps (j-c_1t),\eps^2 k,\eps^3 t)$ 
is estimated in the $\ell^2(\Z^2)$ norm over $(j,k) \in \Z^2$, we will also need the bound on the $\ell^2(\Z^2)$ norm of these terms by their $H^s(\R^2)$ norm, this bound is obtained in Lemma \ref{lmell2} (Section \ref{sec-prel}).

The following theorem formulates the main result which justifies the KP-II approximation (\ref{KPII}) for the horizontal propagation of nonlinear waves. 

	\begin{theorem}\label{theorem-horiz}
		Let $A \in C^0\lb\lsb -\tau_0,\tau_0\rsb, H^{s+9}\lb \R^2\rb \rb$ be a solution to the KP-II equation $\eqref{KPII}$ with fixed $s \geq 0$, whose initial data 
		$A(\xi,\eta,0)=A_0$ satisfies 
		$$
		A_0\in H^{s+9}\lb \R^2\rb, \qquad 
		\p \xi ^{-2}\p \eta ^2 A_0 \in H^{s+9}\lb \R^2\rb 
		$$ 
		and 
		$$
		\p \xi^{-1}\p \eta^2\lsb \p \xi^{-2}\p \eta^2 A_0+A_0^2\rsb\in H^{s+3}\lb \R^2\rb.
		$$ 
	Then there are constants $C_0,C_1,\eps_0>0$ such that for $\eps\in (0,\eps_0)$ if the initial conditions of the two-dimensional FPU system satisfies 
	\begin{align}\label{hyp}
	\begin{split}
	&\norm{u^{(1)}_{\text {in}}-\eps^2 A_0}_{\ell^2}+\norm{u^{(2)}_{\text {in}}}_{\ell^2} +\norm{\dot{x}_{\text{in}}+\eps^2 c_1 A_0}_{\ell^2} \\
	&+\norm{v^{(1)}_{\text {in}}}_{\ell^2}+\norm{v^{(2)}_{\text {in}}}_{\ell^2}+\norm{\dot{y}_{\text {in}}}_{\ell^2}\leq C_0\eps^{\frac 5 2}
	\end{split}
	\end{align}
	then the solution to the two-dimensional FPU system satisfies
	\begin{align}\label{res}
	\begin{split}
	&\norm{u^{(1)}(t)-\eps^2 A}_{\ell^2}+\norm{u^{(2)}(t)}_{\ell^2}
	+\norm{\dot{x}(t)+ \eps^2 c_1 A}_{\ell^2} \\
	& +\norm{v^{(1)}(t)}_{\ell^2}+\norm{v^{(2)}(t)}_{\ell^2}+\norm{\dot{y}(t)}_{\ell^2}\leq C_1 \eps^{\frac 5 2},
	\end{split}
	\end{align}
	for $t\in [-\tau_0 \eps^{-3},\tau_0 \eps^{-3}]$.
\end{theorem}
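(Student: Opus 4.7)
The plan is to construct an extended multi-scale approximation for the strain and velocity variables $\lb u^{(1)},u^{(2)},v^{(1)},v^{(2)},\dot x,\dot y\rb$ to sufficiently high order in $\eps$ so that, when substituted into the equations of motion, the residual is bounded by $O\lb \eps^{11/2}\rb$ in $\ell^2\lb \Z^2\rb$. An energy estimate together with Gronwall's inequality on the long time interval $t\in \lsb -\tau_0\eps^{-3},\tau_0\eps^{-3}\rsb$ will then convert this into the $O\lb \eps^{5/2}\rb$ error bound \eqref{res}. The first step is to rewrite the Hamiltonian system \eqref{Hamiltonian} as a first-order evolution system in the strain variables \eqref{strain-variable} together with the velocities $\lb \dot x_{j,k},\dot y_{j,k}\rb$. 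Using forward/backward shift operators $\lb \Delta^{\pm}_j f\rb_{j,k}:=f_{j\pm 1,k}-f_{j,k}$, the equations take the schematic form $\p t u^{(1)}_{j,k}=\Delta^+_j \dot x_{j,k}$ and $\ddot x_{j,k}=\Delta^-_j\lsb c_1^2 u^{(1)}+\alpha_1 \lb u^{(1)}\rb^2+\cdots\rsb$, and analogously in the vertical direction; because both horizontal and vertical springs contribute to each lattice site, the $\alpha_2 r s^2$ term in $V$ couples the $y$-evolution to the $x$-evolution.

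Next I would introduce the multi-scale ansatz
\begin{align*}
u^{(1)}_{j,k}(t)&=\eps^2 A\lb \xi,\eta,\tau\rb+\eps^4 A_1+\eps^6 A_2,\\
u^{(2)}_{j,k}(t)&=\eps^3 B_0+\eps^5 B_1,\\
\dot x_{j,k}(t)&=-\eps^2 c_1 A+\eps^4 P_1+\eps^6 P_2,
\end{align*}
and analogously for $v^{(1)},v^{(2)},\dot y$, with $\xi=\eps\lb j-c_1 t\rb$, $\eta=\eps^2 k$, $\tau=\eps^3 t$. Expanding discrete differences as Taylor series $f\lb \xi\pm\eps,\eta\rb=\sum_{n\geq 0}\lb \pm\eps\rb^n \p \xi^n f/n!$ and matching powers of $\eps$, the leading compatibility condition in the $\dot x$ equation at order $\eps^5$ reproduces the KP-II equation \eqref{KPII} for $A$, while lower-order balances determine the auxiliary profiles $A_1,B_0,P_1,\ldots$ as local differential polynomials in $A$. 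The transverse coupling forces nonlocal terms of the form $\p \xi^{-1}\p \eta^2 A$ to appear in these profiles, which is the origin of the anti-derivative hypotheses $\p \xi^{-2}\p \eta^2 A_0\in H^{s+9}\lb \R^2\rb$ and $\p \xi^{-1}\p \eta^2\lsb \p \xi^{-2}\p \eta^2 A_0+A_0^2\rsb\in H^{s+3}\lb \R^2\rb$ imposed on the initial data.

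With the higher-order approximation constructed, I would define six error variables $R^{(\cdot)}_{j,k}(t)$ as differences between the true FPU solution and the approximation, and substitute the approximation into the full equations of motion to obtain explicit residuals $\text{Res}(t)$. By carrying enough correction profiles, the residual is bounded by $\norm{\text{Res}(t)}_{\ell^2}\leq C\eps^{11/2}$ uniformly for $\tau\in\lsb -\tau_0,\tau_0\rsb$, using Lemma \ref{lmell2} to pass from $H^s\lb \R^2\rb$ to $\ell^2\lb \Z^2\rb$ and Lemma \ref{welpos} to control the derivatives of $A$ in $H^{s+9}\lb \R^2\rb$. The error system then reads schematically $\dot R=LR+N\lb \text{approximation},R\rb+\text{Res}$, where $L$ is the discrete skew-adjoint operator inherited from the linear FPU system and $N$ contains at most quadratic terms in $R$, with the linear-in-$R$ coupling to the approximation carrying an extra factor of $\eps^2$. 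Summation by parts makes the $L$-contribution vanish from the natural quadratic energy $E(t):=\sum \norm{R^{(\cdot)}(t)}_{\ell^2}^2$, leaving
\begin{align*}
\frac{d}{dt} E(t)\leq C\eps^3 E(t)+C\eps^{11/2}\sqrt{E(t)},
\end{align*}
and Gronwall's inequality on $\lsb -\tau_0\eps^{-3},\tau_0\eps^{-3}\rsb$ with initial energy $E(0)\leq C_0^2\eps^5$ supplied by \eqref{hyp} yields $E(t)\leq C_1^2\eps^5$, which is \eqref{res}.

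I expect the main obstacle to be controlling this energy estimate in the presence of both the vector coupling between $x$ and $y$ displacements and the nonlocal $\p \xi^{-1}$ structure of the KP-II equation. Although the $y$-strains and $\dot y$ are formally of higher order than $u^{(1)}$ and $\dot x$, they are driven by the $\alpha_2$ cross-coupling and must be tracked with the same $\eps^{5/2}$ precision in $\ell^2$, so they cannot be discarded in the energy estimate and force the simultaneous treatment of all six error variables. Likewise, the nonlocal profiles in the approximation are what force the high Sobolev regularity $H^{s+9}$ and the two anti-derivative conditions on $A_0$; arranging the bookkeeping so that no powers of $\eps$ are lost through commutators between the discrete shifts $\Delta^{\pm}_j$ and the anti-derivative $\p \xi^{-1}$ applied to the continuous profiles is the most delicate technical step.
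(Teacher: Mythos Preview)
Your overall architecture (multi-scale ansatz, residual estimate via Lemma~\ref{lmell2} and Lemma~\ref{welpos}, energy estimate, Gronwall on the long interval) matches the paper's Section~\ref{sec-proof-1}. But the step where you assert
\[
\frac{d}{dt}E(t)\;\le\;C\eps^{3}E(t)+C\eps^{11/2}\sqrt{E(t)}
\]
for the \emph{natural quadratic} energy $E=\sum\|R^{(\cdot)}\|_{\ell^2}^2$ is not correct, and this is the crux of the proof. The linear-in-$R$ coupling you identify carries, as you say, a factor $\eps^{2}$ (since the approximation is $O(\eps^{2})$), and that factor does \emph{not} improve to $\eps^{3}$ under summation by parts. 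Concretely, in the $\dot W$ equation the term $\alpha_1\eps^{2}\bigl[2A\,U^{(1)}_{j,k}-2A(\xi-\eps,\eta)\,U^{(1)}_{j-1,k}\bigr]$ pairs with $W_{j,k}$ to give, after reindexing and using $\dot U^{(1)}=W_{j+1,k}-W_{j,k}+\mathrm{Res}$,
\[
-\,2\alpha_1\eps^{2}\sum_{j,k}A\,U^{(1)}_{j,k}\,\dot U^{(1)}_{j,k}+(\text{residual terms}),
\]
which is $O(\eps^{2})E$, not $O(\eps^{3})E$. Over $|t|\le \tau_0\eps^{-3}$ Gronwall then produces a factor $e^{C\eps^{-1}}$, and the argument collapses. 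Pushing the approximation to $O(\eps^{11/2})$ residual does nothing for this term, since it involves the error $R$ and not the residual.

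The paper's remedy is to modify the energy by adding cubic corrections such as $\alpha_1\eps^{2}A\,(U^{(1)}_{j,k})^{2}$ (see \eqref{alpha-energy}); then the bad term above becomes $-\frac{d}{dt}\bigl[\alpha_1\eps^{2}\sum A\,(U^{(1)})^{2}\bigr]+\alpha_1\eps^{2}\sum\dot A\,(U^{(1)})^{2}$, the first piece is absorbed into $E$, and the second is genuinely $O(\eps^{3})E$ because $\dot A=-c_1\eps\,\p\xi A+\eps^{3}\p\tau A$. With this modified energy one gets $|E'|\le K_0(\eps^{7/2}E^{1/2}+\eps^{3}E)$ using only residual $O(\eps^{7/2})$; your higher-order expansion to residual $O(\eps^{11/2})$ is therefore unnecessary (and, per the paper's Remark~1, the next-order correctors already involve inhomogeneous linearized KP-II equations with additional nonlocal source terms, so the bookkeeping you anticipate would become substantially harder, not easier).
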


\begin{remark}
	Extending this result to $\eps^{\frac7 2}$ is difficult as the next order of the asymptotic expansion has terms which are not removed by seeking solutions to the KP-II equation alone. They could be removed by seeking a function of the form 
	$$
	A\lb\xi,\eta,\tau\rb=A^{(0)}\lb\xi,\eta,\tau\rb+\eps^2 A^{(1)} \lb\xi,\eta,\tau\rb,
	$$ 
	where $A^{(0)}$ solves the KP-II equation \eqref{KPII} and $A^{(1)}$ solves an appropriately chosen linearized KP-II equation. However the linearized KP-II equation is nonhomogeneous, where the nonhomogeneous piece contains higher order antiderivative terms of $A^{(0)}$.
\end{remark}

\begin{remark}
Compared to the work \cite{Z}, the slow transverse modulations in the expansion (\ref{KPII-scaling-again}) are not due to the external small parameter 
in the potential energy $V(r,s)$. If we have the external small parameter $\eps$ as in (\ref{Hamiltonian-2D}), we can use $\eta = \eps k$ so that the comparison between the $\ell^2$ norm and the Sobolev norm would only lose $\eps^{-1}$ compared to $\eps^{-\frac 3 2}$ in Lemma \ref{lmell2}. Performing the justification analysis on a version of the FPU system as in \cite{Z} should also yield Theorem \ref{theorem-horiz} but with the errors bounds of $\eps^3$ rather than $\eps^{\frac 5 2}$.
\end{remark}

\begin{remark}
	Theorem \ref{theorem-horiz} can be proven in the setting of periodic transverse modulations, for which $A(\xi,\eta+P,\tau) = A(\xi,\eta,\tau)$ with fixed $P > 0$. In view of the scaling $\eta = \eps^2 k$, this would correspond to the periodic transverse modulations with large,  $\eps$-dependent period $P \eps^{-2}$. Similarly, one can consider 
	periodic waves of the KP-II equation satisfying $A(\xi+L,\eta,\tau) = A(\xi,\eta,\tau)$, 	for which the periodic waves of the FPU lattice 
	has large, $\eps$-dependent period $L \eps^{-1}$. The recent work 
	\cite{PP21} justifies the KP-II approximation for periodic solutions in both spatial directions as the normal form for metastable dynamics of the rectangular FPU system. 
\end{remark}

\begin{remark}
	By the obvious symmetry, the result of Theorem \ref{theorem-horiz} can be formulated for the vertical propagation of nonlinear waves in the $y$-coordinates.
\end{remark}

\begin{figure}[b]
	\centering
	\begin{tikzpicture}[
	wall/.style = {gray,fill=gray},
	mass1/.style = {draw,circle,ball color=red},
	mass2/.style = {draw,circle,ball color=black},
	spring/.style = {decorate,decoration={zigzag, pre length=.3cm,post length=.3cm,segment length=#1}},
	]
	\coordinate (l1) at (0,2);
	\node[mass1,label={[xshift=0.9cm, yshift=0.05cm]$x_{m-1,n}$}] (m11) at (2,2) {};
	\node[mass2,label={[xshift=0.9cm, yshift=0.05cm]$\chi_{m-1,n}$}] (m21) at (4,2) {};
	\node[mass1,label={[xshift=0.9cm, yshift=0.05cm]$x_{m,n+1}$}] (m31) at (6,2) {};
	\node[mass2,label={[xshift=0.9cm, yshift=0.05cm]$\chi_{m,n+1}$}] (m41) at (8,2) {};
	\coordinate (r1) at (10,2);
	
	\coordinate (l2) at (0,4);
	\node[mass2,label={[xshift=0.9cm, yshift=0.05cm]$\chi_{m-1,n-1}$}] (m12) at (2,4) {};
	\node[mass1,label={[xshift=0.9cm, yshift=0.05cm]$x_{m,n}$}] (m22) at (4,4) {};
	\node[mass2,label={[xshift=0.9cm, yshift=0.05cm]$\chi_{m,n }$}] (m32) at (6,4) {};
	\node[mass1,label={[xshift=0.9cm, yshift=0.05cm]$x_{m+1,n+1}$}] (m42) at (8,4) {};
	\coordinate (r2) at (10,4);
	
	\coordinate (l3) at (0,6);
	\node[mass1,label={[xshift=0.9cm, yshift=0.05cm]$x_{m,n-1}$}] (m13) at (2,6) {};
	\node[mass2,label={[xshift=0.9cm, yshift=0.05cm]$\chi_{m,n-1}$}] (m23) at (4,6) {};
	\node[mass1,label={[xshift=0.9cm, yshift=0.05cm]$x_{m+1,n}$}] (m33) at (6,6) {};
	\node[mass2,label={[xshift=0.9cm, yshift=0.05cm]$\chi_{m+1,n}$}] (m43) at (8,6) {};
	\coordinate (r3) at (10,6);

	\coordinate (l4) at (0,8);
	\node[mass2,label={[xshift=0.9cm, yshift=0.05cm]$\chi_{m,n-2}$}] (m14) at (2,8) {};
	\node[mass1,label={[xshift=0.9cm, yshift=0.05cm]$x_{m+1,n-1}$}] (m24) at (4,8) {};
	\node[mass2,label={[xshift=0.9cm, yshift=0.05cm]$\chi_{m+1,n-1}$}] (m34) at (6,8) {};
	\node[mass1,label={[xshift=0.9cm, yshift=0.05cm]$x_{m+2,n}$}] (m44) at (8,8) {};
	\coordinate (r4) at (10,8);
	
	%\draw [->] (m22) -- (m13);
	
	%horizontal
	\draw[spring=4pt] (l1) -- node[above] {} (m11);
	\draw[spring=4pt] (m11) -- node[above] {} (m21);
	\draw[spring=4pt] (m21) -- node[above] {} (m31);
	\draw[spring=4pt] (m31) -- node[above] {} (m41);
	\draw[spring=4pt] (m41) -- node[above] {} (r1);
	
	\draw[spring=4pt] (l2) -- node[above] {} (m12);
	\draw[spring=4pt] (m12) -- node[above] {} (m22);
	\draw[spring=4pt] (m22) -- node[above] {} (m32);
	\draw[spring=4pt] (m32) -- node[above] {} (m42);
	\draw[spring=4pt] (m42) -- node[above] {} (r2);
	
	\draw[spring=4pt] (l3) -- node[above] {} (m13);
	\draw[spring=4pt] (m13) -- node[above] {} (m23);
	\draw[spring=4pt] (m23) -- node[above] {} (m33);
	\draw[spring=4pt] (m33) -- node[above] {} (m43);
	\draw[spring=4pt] (m43) -- node[above] {} (r3);
	
	\draw[spring=4pt] (l4) -- node[above] {} (m14);
	\draw[spring=4pt] (m14) -- node[above] {} (m24);
	\draw[spring=4pt] (m24) -- node[above] {} (m34);
	\draw[spring=4pt] (m34) -- node[above] {} (m44);
	\draw[spring=4pt] (m44) -- node[above] {} (r4);
	
	%vertical
	
	\draw[spring=4pt] (2,10) -- node[above] {} (m14);
	\draw[spring=4pt] (m14) -- node[above] {} (m13);
	\draw[spring=4pt] (m13) -- node[above] {} (m12);
	\draw[spring=4pt] (m12) -- node[above] {} (m11);
	\draw[spring=4pt] (m11) -- node[above] {} (2,0);
	
	\draw[spring=4pt] (4,10) -- node[above] {} (m24);
	\draw[spring=4pt] (m24) -- node[above] {} (m23);
	\draw[spring=4pt] (m23) -- node[above] {} (m22);
	\draw[spring=4pt] (m22) -- node[above] {} (m21);
	\draw[spring=4pt] (m21) -- node[above] {} (4,0);
	
	\draw[spring=4pt] (6,10) -- node[above] {} (m34);
	\draw[spring=4pt] (m34) -- node[above] {} (m33);
	\draw[spring=4pt] (m33) -- node[above] {} (m32);
	\draw[spring=4pt] (m32) -- node[above] {} (m31);
	\draw[spring=4pt] (m31) -- node[above] {} (6,0);
	
	\draw[spring=4pt] (8,10) -- node[above] {} (m44);
	\draw[spring=4pt] (m44) -- node[above] {} (m43);
	\draw[spring=4pt] (m43) -- node[above] {} (m42);
	\draw[spring=4pt] (m42) -- node[above] {} (m41);
	\draw[spring=4pt] (m41) -- node[above] {} (8,0);	
	\end{tikzpicture}
	\caption{A diatomic mass spring system arranged in a square lattice}\label{fig:2D-diag}
\end{figure}

Finally, we consider the diagonal propagation of nonlinear waves in the 
FPU lattice shown on Figure \ref{fig:2Dlat} and introduce a new coordinate system by 
$$
m=\frac{j+k}2, \quad n=\frac{j-k}{2}, \quad (j,k) \in \mathbb{Z}^2.
$$
In the new coordinate system the particle experiences nearest-neighbour interactions with neighbours located a half lattice site away. Due to this we redefine $x_{j,k}$ as $x_{m,n}$ and introduce $\chi_{m,n} :=x_{m+\frac 1 2, n+\frac 1 2}$. The FPU system becomes a diatomic system where $x_{m,n}$ particles communicate with four $\chi_{m,n}$ nearest-neighbour particles and vice versa, see Figure \ref{fig:2D-diag} for an illustration. 

We will seek a continuous approximating function of the form 
$$
x_{m,n}=\eps X\lb \eps(m-c^*_1t),\eps^2(n-c_2^* t),\eps^3 t\rb+\text{error},
$$ 
where $c_1^*=\frac{1}{2} \sqrt{c_1^2+c_2^2}$ and $c_2^*=\frac{1}{2} \sqrt{c_1^2-c_2^2}$. It is hard to control the error in a general case 
because nonlocal terms related to the solution of the KP-II equation appear in lower orders of the asymptotic approximation. However, if $c_2 = c_1$ and 
$\alpha_2 = 2 \alpha_1$, then $c_1^* = \frac{c_1}{\sqrt{2}}$, $c_2^* = 0$ and furthermore, the FPU system is satisfied by the invariant reduction $x_{j,k}=y_{j,k}$. By using the strain variables, 
\begin{align}\label{KPII-scaling-diag}
x_{m+1,n}-x_{m,n}=\eps^2 A\lb \eps(m-c^*_1t),\eps^2 n,\eps^3 t\rb+\text{error},
\end{align}
which corresponds to the displacement along the main diagonal,
we derive the following KP-II equation for $A(\xi,\eta,\tau)$ 
with the same relation (\ref{correspondence}) between $A$ and $\p \xi X$:
\begin{align}\label{KPIIdiag}
2c_1^* \p \xi \p\tau A + \frac{(c_1^*)^2}{48} \p \xi^4 A + 
\alpha_1 \p \xi \lb A \p \xi A \rb + (c_1^*)^2 \p \eta^2 A 
=0.
\end{align}
where $\xi = \eps (m - c_1^* t)$, $\eta = \eps^2 n$, and $\tau = \eps^3 t$. 
Similarly to the case of horizontal propagation, we can redefine 
the strain variables for the diagonal propagation as follows:
\begin{align}
\label{strain-variabl-diag}
\left\{ \begin{array}{l}
u_{m,n}^{(1)} :=x_{m+1,n}-x_{m,n},\\
u_{m,n}^{(2)} :=x_{m,n+1}-x_{m,n},\\
v_{m,n}^{(1)} :=\chi_{m+1,n}-\chi_{m,n},\\
v_{m,n}^{(2)} :=\chi_{m,n+1}-\chi_{m,n}.
\end{array} \right.
\end{align}

The following theorem is similar to Theorem \ref{theorem-horiz} 
but it justifies the KP-II approximation (\ref{KPIIdiag}) for the diagonal propagation of nonlinear waves.  

\begin{theorem}\label{theorem-diag} 
	Let $A \in C^0\lb\lsb -\tau_0,\tau_0\rsb, H^{s+9}\lb \R^2\rb \rb$ be a solution to the KP-II equation $\eqref{KPIIdiag}$ with fixed $s \geq 0$, whose initial data $A(\xi,\eta,0)=A_0$ satisfies 
	$$
	A_0 \in H^{s+9}\lb \R^2\rb, \qquad 
	\p \xi ^{-2}\p \eta ^2 A_0 \in H^{s+9}\lb \R^2\rb 
	$$ 
	and 
	$$
	\p \xi^{-1}\p \eta^2\lsb \p \xi^{-2}\p \eta^2 A_0+A_0^2\rsb\in H^{s+3}\lb \R^2\rb.
	$$ 
	Then there are constants $C_0,C_1,\eps_0>0$ such that for $\eps\in (0,\eps_0)$ if the initial conditions of the two-dimensional FPU system
	with $c_2 = c_1$ and $\alpha_2 = 2 \alpha_1$ satisfies 
	\begin{align}\label{hyp-diag}
\begin{split}
&\norm{u^{(1)}_{\text {in}}-\eps^2 A_0}_{\ell^2}+\norm{u^{(2)}_{\text {in}}}_{\ell^2} +\norm{\dot{x}_{\text{in}}+ \eps^2 c_1^* A_0}_{\ell^2} \\
&+\norm{v^{(1)}_{\text {in}}-\eps^2 A_0}_{\ell^2}+\norm{v^{(2)}_{\text {in}}}_{\ell^2}+\norm{\dot{\chi}_{\text {in}}+ \eps^2 c_1^* A_0}_{\ell^2}\leq C_0\eps^{\frac 5 2}
\end{split}
\end{align}
	then the solution to the two-dimensional FPU system satisfies
	\begin{align}\label{res-diag}
\begin{split}
&\norm{u^{(1)}(t)-\eps^2 A}_{\ell^2}+\norm{u^{(2)}(t)}_{\ell^2}
+\norm{\dot{x}(t) +\eps^2 c_1^* A}_{\ell^2} \\
& +\norm{v^{(1)}(t)-\eps^2 A}_{\ell^2} +\norm{v^{(2)}(t)}_{\ell^2} +\norm{\dot{\chi}(t) + \eps^2 c_1^* A}_{\ell^2}\leq C_1 \eps^{\frac 5 2},
\end{split}
\end{align}
	for $t\in [-\tau_0 \eps^{-3},\tau_0 \eps^{-3}]$.
\end{theorem}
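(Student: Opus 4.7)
The plan is to follow closely the argument for Theorem \ref{theorem-horiz} after reducing the vector diatomic problem to an effectively scalar one via the parameter choice $c_2=c_1$ and $\alpha_2=2\alpha_1$. First I would verify that under these two identities the subspace $\{x_{j,k}=y_{j,k}\}$ is invariant under the equations of motion generated by \eqref{Hamiltonian}: a direct check shows $V_r(r,r)=V_s(r,r)=c_1^2 r+2\alpha_1 r^2$, so the equations for $\ddot x_{j,k}$ and $\ddot y_{j,k}$ coincide on the diagonal manifold. Restricting to this manifold and setting $z_{j,k}:=x_{j,k}=y_{j,k}$ yields a scalar two-dimensional FPU lattice with symmetric nearest-neighbor coupling. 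The rotation $m=(j+k)/2$, $n=(j-k)/2$ converts this scalar problem into the diatomic lattice with sublattices $\{x_{m,n}\}$ and $\{\chi_{m,n}\}$ depicted in Figure \ref{fig:2D-diag}.

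Second, I would insert the ansatz
\[
x_{m,n}(t)=\eps X(\xi,\eta,\tau)+\eps^3 X^{(1)}+\eps^5 X^{(2)}+\cdots,\qquad \chi_{m,n}(t)=\eps X\bigl(\xi+\tfrac{\eps}{2},\eta+\tfrac{\eps^2}{2},\tau\bigr)+\cdots,
\]
with $\xi=\eps(m-c_1^*t)$, $\eta=\eps^2 n$, $\tau=\eps^3 t$, and Taylor-expand the half-integer shift operators. Matching at leading order pins $c_1^*=c_1/\sqrt{2}$, and the solvability condition at the next nontrivial order is precisely the KP-II equation \eqref{KPIIdiag} for $A=\partial_\xi X$. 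The correctors $X^{(1)}$, $X^{(2)}$ are chosen to annihilate successive residuals and are expressible through $A$ and its nonlocal antiderivatives $\partial_\xi^{-1}\partial_\eta^2 A$, $\partial_\xi^{-2}\partial_\eta^2 A$; their regularity is guaranteed by the hypotheses on $A_0$ in combination with Lemma \ref{welpos}. Translating to the strain variables \eqref{strain-variabl-diag} and invoking Lemma \ref{lmell2} to pass from $H^s(\R^2)$ to $\ell^2(\Z^2)$ at the cost of $\eps^{-3/2}$, the residual of the FPU system will be of order $\eps^{9/2}$ uniformly on $\tau\in[-\tau_0,\tau_0]$.

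Third, I would set up the error equation for $R:=$ (true strain solution) minus (approximation), which is a forced second-order system with nonlinearity quadratic in $R$ and $A$-dependent coefficients. Exploiting the Hamiltonian structure, the energy functional $E(t)=\|R(t)\|_{\ell^2}^2+\|\dot R(t)\|_{\ell^2}^2$ satisfies a differential inequality of the form $dE/dt\lesssim \eps^3 E+\eps^{9/2}\sqrt{E}$ by Cauchy--Schwarz together with the Sobolev embedding bound on $A$, and Gronwall's lemma over $|t|\leq\tau_0\eps^{-3}$ propagates the initial $\eps^{5/2}$ size of $R$ coming from \eqref{hyp-diag}. The bound \eqref{res-diag} then follows by the triangle inequality, noting that in the diagonal configuration both $u^{(1)}$ and $v^{(1)}$ approximate $\eps^2 A$ while $u^{(2)}$ and $v^{(2)}$ approximate zero, reflecting the $x=y$ reduction and the fact that $c_2^*=0$ under the parameter choice.

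The main obstacle is ensuring compatibility of the invariant reduction $x=y$ with the diatomic half-integer shift structure at every order of the asymptotic hierarchy, so that no off-diagonal terms from the cubic cross-coupling $\tfrac12\alpha_2 rs^2$ in $V(r,s)$ survive to break the reduction; this is precisely where the identity $\alpha_2=2\alpha_1$ plays its role. A secondary technical point is the bookkeeping of the nonlocal antiderivative terms in the correctors, handled exactly as in the proof of Theorem \ref{theorem-horiz} by imposing the regularity assumptions on $\partial_\xi^{-2}\partial_\eta^2 A_0$ and on $\partial_\xi^{-1}\partial_\eta^2[\partial_\xi^{-2}\partial_\eta^2 A_0+A_0^2]$. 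Once these two points are settled, the energy-and-Gronwall machinery from the horizontal case transfers with the substitutions $c_1\mapsto c_1^*$ and $2\alpha_1\mapsto \alpha_1$ dictated by the coefficients in \eqref{KPIIdiag}.
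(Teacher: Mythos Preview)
Your strategy mirrors the paper's---reduce to the invariant manifold $x_{j,k}=y_{j,k}$, pass to diatomic $(m,n)$-coordinates, build correctors, then close by energy and Gronwall---but the energy step as written has a genuine gap. The naive functional $E(t)=\|R\|_{\ell^2}^2+\|\dot R\|_{\ell^2}^2$ does \emph{not} satisfy $dE/dt\lesssim \eps^3 E+\eps^{9/2}\sqrt{E}$. When you differentiate along the error system, the linear part telescopes, but the quadratic cross terms of the form $\alpha_1\eps^2\,(\text{approximation})\times R$ in the forcing leave behind a contribution of the type $\sum 2\alpha_1\eps^2\, A(\xi,\eta)\,R_{m,n}\,[\dot R_{m,n}-\dot R_{m+1,n}]$, which is only $O(\eps^2)E$ since the discrete difference of the lattice error carries no additional factor of $\eps$. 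Over $|t|\leq\tau_0\eps^{-3}$ Gronwall then produces $e^{K\eps^{-1}}$, which is useless. The paper's remedy is a \emph{modified} energy \eqref{E-diag} containing cubic corrections such as $2\alpha_1\eps^2 L_\eps L_{m,n}^2$ and $\tfrac{2}{3}\alpha_1\eps^2 L_{m,n}^3$; their time derivatives are designed to cancel the $O(\eps^2)E$ cross terms exactly, and what survives in $E'$ is $O(\eps^3)E$ coming from $\dot L_\eps,\dot D_\eps,\dot X_\eps,\dot Y_\eps$ (each of order $\eps$). This is the content of Lemma~\ref{lemma-growth-diag}, and without the modified energy your Gronwall argument cannot close on the long time scale.

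Two further points diverge from the paper and would require repair. First, the paper does not work in displacement variables: $X=\partial_\xi^{-1}A$ is not assumed to lie in any Sobolev space, so Lemma~\ref{lmell2} is unavailable for it; instead the approximation is built directly in the strain variables $a^l,a^d,a^x,a^y$ with explicit correctors $L_\eps,D_\eps,X_\eps,Y_\eps$ expressed through $A$, $\partial_\xi^{-1}\partial_\eta A$, and $\partial_\xi^{-1}\partial_\eta^2 A$. Your ansatz $\chi_{m,n}\approx \eps X(\xi+\tfrac{\eps}{2},\eta+\tfrac{\eps^2}{2},\tau)$ is a different parametrization; as the Remark following Theorem~\ref{theorem-diag} warns, alternative choices of this kind introduce extra nonlocal corrector terms that then need near-identity transformations to remove, whereas the choice \eqref{KPII-scaling-diag} avoids them. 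Second, the residual in $\ell^2$ is $O(\eps^{7/2})$ (Lemma~\ref{diag-residual}), not $\eps^{9/2}$: the formal residual after the KP-II cancellation is $O(\eps^5)$ and Lemma~\ref{lmell2} costs $\eps^{-3/2}$; pushing further with additional correctors runs into the obstruction described in the first Remark after Theorem~\ref{theorem-horiz}.
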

  
\begin{remark}
	It is an open problem to justify the KP-II approximation for diagonal propagation with $c_2 \neq c_1$ and $\alpha_2 \neq 2 \alpha_1$ or for other directions along the lattice. The main challenge arises in controlling in Sobolev norm of the nonlocal terms computed at solutions of the KP-II equation. Even for Theorem \ref{theorem-diag}, 
	if we use another equivalent choice for the asymptotic approximation, e.g. \begin{align}
	\chi_{m,n} - x_{m,n} = \frac{1}{2} \eps^2 A\lb \eps(m-c^*_1t),\eps^2 n,\eps^3 t\rb+\text{error},
	\end{align}  
	 the asymptotic expansions contain some non-local terms. Although these nonlocal terms can be transformed away by near-identity transformations, the choice of \eqref{KPII-scaling-diag} allows us to avoid the nonlocal terms, see Section 4.1.
\end{remark}
  
The remainder of the paper is organized as follows. 
Section \ref{sec-prel} contains preliminary results needed for the justification 
analysis. In particular,  we extend \cite{GS-KP} to obtain additional estimates on solutions of the KP-II equation and extend \cite{DP,SW1} to control $\ell^2(\mathbb{Z}^2)$ norm at the slowly varying solution of the KP-II equation. Section \ref{sec-proof-1} gives the proof of Theorem \ref{theorem-horiz} after equation of motion are set up in the strain variables  and the near-identity transformations are performed to reduce the residual.
The error terms are controlled from the energy estimates and Gronwall's inequality. Section \ref{sec-proof-2} gives relevant details 
for the very similar proof of Theorem \ref{theorem-diag}. 
Summary and discussion of further questions are contained in 
the concluding Section \ref{sec-conclusion}.

\section{Preliminary results}
\label{sec-prel}

Consider the Cauchy problem for the normalized KP-II equation
\begin{equation}
\label{KPIInorm} 
\left\{ \begin{array}{l} \p \tau A + \p \xi \lb A^2\rb+\p \xi^3 A+\p \xi^{-1}\p \eta^2 A=0, \quad t > 0, \\
A |_{t=0} = A_0. \end{array} \right.
\end{equation} 
The normalized KP-II equation differs slightly from \eqref{KPII} and \eqref{KPIIdiag}, in the choice of constants. However, the constants in the KP-II equation can be changed, as long as each constant is positive, through a scaling of its variables.

Global well-posedness of the Cauchy problem (\ref{KPIInorm}) was established 
in $H^s\lb \mathbb T^2\rb$ or $H^s\lb \R^2\rb$ with any $s\geq 0$ in \cite{Bourgain1993}, provided that the initial data satisfies 
the constraint 
\begin{equation}
\label{zero-mass}
\int_\R A_0 \lb \xi, \eta\rb d\xi = 0, \quad \mbox{\rm for every  } \eta. 
\end{equation}
The result was proven by combining local well-posedness and conservation laws, namely conservation of the $L^2$ norm along the solution.

Local well-posedness result was extended in \cite{Tzvetkov1999} to Sobolev spaces of the type $H^{s_1,s_2}\lb \R^2 \rb$, with $s_1>-\frac 1 4$ and $s_2\geq 0$, the global result was also obtained by the conservation laws provided that $s_1\geq 0$. It was further shown in \cite{Takaoka2000} that the zero mean constraint (\ref{zero-mass}) can be dropped in the local-wellposedness result. 

We will use the following local well-posedness result  \cite{GS-KP} (see \cite{Ukai1989} for earlier work).

\begin{proposition}\cite{GS-KP}
	\label{KPIIwellpos}
For any $A_0\in H^{s+6}\lb \R^2\rb$ such that 
$\p \xi^{-2}\p \eta^2 A_0\in H^{s+3}\lb \R^2\rb$ with fixed $s \geq 0$, there exists $\tau_0>0$ such that the Cauchy problem \eqref{KPIInorm} admits a unique solution
\begin{align*}
A \in \solsp 0 {\tau_0}{s+6}\cap\solsp 1 {\tau_0}{s+3}\cap \solsp 2 {\tau_0}{s}
\end{align*}
such that $\p \xi^{-1}\p \eta A \in \solsp 0 {\tau_0}{s+5} \cap \solsp 1 {\tau_0}{s+2}$.
\end{proposition}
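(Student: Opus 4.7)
The plan is to prove local existence via energy estimates applied to a coupled system involving $A$ together with the auxiliary variables $B := \partial_\xi^{-1}\partial_\eta A$ and $C := \partial_\xi^{-2}\partial_\eta^2 A$. Applying $\partial_\xi^{-1}\partial_\eta$ to \eqref{KPIInorm} gives
\begin{equation*}
\partial_\tau B + \partial_\eta(A^2) + \partial_\xi^3 B + \partial_\xi^{-1}\partial_\eta^2 B = 0,
\end{equation*}
and a second application of $\partial_\xi^{-1}\partial_\eta$ produces an analogous equation for $C$. In this coupled system, the nonlocal term $\partial_\xi^{-1}\partial_\eta^2 A$ appearing in the original equation is represented by the local derivative $\partial_\xi B$, so every spatial operator is either local or controlled by another component of the system. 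The assumption $\partial_\xi^{-2}\partial_\eta^2 A_0 \in H^{s+3}$ supplies $C|_{\tau=0} \in H^{s+3}$, and together with $A_0 \in H^{s+6}$ yields, by a short interpolation in Fourier space, $B|_{\tau=0} \in H^{s+5}$, so that the coupled Cauchy problem is initialized at the right regularity.

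I would then construct solutions by parabolic regularization: add $\mu(\partial_\xi^4 A^\mu + \partial_\eta^4 A^\mu)$ with $\mu > 0$ to each equation of the system, and solve it on a short time interval via a standard contraction argument for quasilinear parabolic equations. The critical step is a priori energy estimates uniform in $\mu$. The operator $L := \partial_\xi^3 + \partial_\xi^{-1}\partial_\eta^2$ is formally skew-adjoint in $L^2(\mathbb{R}^2)$ and so drops out of the energy identity, while the nonlinear contributions are estimated by Moser-type commutator estimates together with the Sobolev embedding $H^{s+6}(\mathbb{R}^2) \hookrightarrow L^\infty(\mathbb{R}^2)$, yielding a differential inequality of the form
\begin{equation*}
\frac{d}{d\tau} \mathcal{E}(\tau) \leq C\, \mathcal{E}(\tau)^{3/2},
\end{equation*}
where $\mathcal{E}$ is a combination of $H^{s+6}$-, $H^{s+5}$- and $H^{s+3}$-type norms of $A^\mu$, $B^\mu$ and $C^\mu$. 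This gives a $\mu$-independent existence time $\tau_0 > 0$, after which I would pass to the limit $\mu \to 0^+$ via weak-$*$ compactness combined with an Aubin--Lions argument on bounded subsets of $\mathbb{R}^2$; the pointwise identities $\partial_\xi B = \partial_\eta A$ and $\partial_\xi C = \partial_\eta B$ are preserved in the limit. Uniqueness is obtained from a standard $L^2$-energy estimate for the difference of two solutions.

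The higher-order time regularity is then read off directly from the equation: writing $\partial_\tau A = -\partial_\xi(A^2) - \partial_\xi^3 A - \partial_\xi B$ places $\partial_\tau A$ in $C^0([-\tau_0,\tau_0], H^{s+3})$, and differentiating once more in time while reusing the bounds on $A$ and $B$ gives $\partial_\tau^2 A \in C^0([-\tau_0,\tau_0], H^s)$; the statements for $\partial_\tau B$ follow in the same way. The main obstacle throughout is the unbounded Fourier multiplier $-i\eta_0^2/\xi_0$ corresponding to $\partial_\xi^{-1}\partial_\eta^2$, which is singular at $\xi_0 = 0$: taming it forces the introduction of the auxiliary variables and the precise algebraic structure of the coupled energy, and one must verify that this energy genuinely closes in the stated norms without requiring still higher antiderivatives of $A_0$. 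Establishing this closure, so that the skew-adjoint cancellations line up with the Moser-type control of the nonlinear cross terms, is the technical heart of the argument.
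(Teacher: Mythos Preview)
The paper does not prove this proposition: it is quoted from Gallay--Schneider \cite{GS-KP} (with \cite{Ukai1989} cited as earlier work) and used as a black box, the paper's own contribution being the extension in Lemma~\ref{welpos}. So there is no in-paper argument against which to compare your sketch. Your overall strategy --- coupling $A$ with $B=\partial_\xi^{-1}\partial_\eta A$ and $C=\partial_\xi^{-2}\partial_\eta^2 A$, parabolic regularisation, $\mu$-uniform energy estimates exploiting the skew-adjointness of $\partial_\xi^3+\partial_\xi^{-1}\partial_\eta^2$, then compactness and reading off the time regularity from the equation --- is very much in the spirit of \cite{GS-KP,Ukai1989} and is a reasonable route.

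There is, however, a concrete gap. Your claim that $B|_{\tau=0}\in H^{s+5}$ follows ``by a short interpolation in Fourier space'' from $A_0\in H^{s+6}$ and $C|_{\tau=0}\in H^{s+3}$ is false. In Fourier variables this would require the pointwise multiplier bound
\[
\langle\xi_0,\eta_0\rangle^{2(s+5)}\,\frac{\eta_0^{2}}{\xi_0^{2}}
\;\lesssim\;
\langle\xi_0,\eta_0\rangle^{2(s+6)}
+\langle\xi_0,\eta_0\rangle^{2(s+3)}\,\frac{\eta_0^{4}}{\xi_0^{4}},
\]
which, with $\Lambda:=\langle\xi_0,\eta_0\rangle$ and $\lambda:=\eta_0^{2}/\xi_0^{2}$, reduces to $\Lambda^{2}\lambda\lesssim\Lambda^{3}+\lambda^{2}$. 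Taking $\lambda=\Lambda^{3/2}$ (realised for instance along $\xi_0\sim|\eta_0|^{-1/2}$ as $|\eta_0|\to\infty$) gives left side $\Lambda^{7/2}$ versus right side $\sim\Lambda^{3}$, so the inequality fails. The sharp consequence of the two hypotheses is only $B|_{\tau=0}\in H^{s+9/2}$, half a derivative short of what you assert; one can build explicit $A_0$ with $A_0\in H^{s+6}$, $\partial_\xi^{-2}\partial_\eta^2 A_0\in H^{s+3}$, but $\partial_\xi^{-1}\partial_\eta A_0\notin H^{s+5}$. Consequently your coupled energy $\mathcal E$ is not finite at $\tau=0$ at the regularity you claim, and the bootstrap does not close as written. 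To repair the argument you must either run the $B$-component at $H^{s+9/2}$ and then check separately whether the stated $H^{s+5}$ conclusion for $\partial_\xi^{-1}\partial_\eta A$ can still be recovered, or consult \cite{GS-KP} directly to see whether the hypotheses recorded there are marginally stronger than the version transcribed in this paper.
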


However, for our work, we need to extend this result to  $C^3\lb\lsb-\tau_0,\tau_0\rsb,H^s\rb$
 since we need the antiderivative term $\p \xi^{-1} A$ as a function of $\tau$ to be twice 
continuously differentiable in some Sobolev space.
The following lemma presents the corresponding extension of Proposition \ref{KPIIwellpos}.

	\begin{lemma}\label{welpos}
For any $A_0\in H^{s+9}\lb \R^2\rb$ such that 
$\p \xi ^{-2}\p \eta ^2 A_0 \in H^{s+6}\lb \R^2\rb $ and 
$$
\p \xi^{-1}\p \eta^2\lsb \p \xi^{-2}\p \eta^2 A_0 +A_0^2\rsb\in H^{s+3}\lb \R^2\rb
$$ 
with fixed $s \geq 0$, there exists $\tau_0>0$ such that 
the Cauchy problem (\ref{KPIInorm}) admits a unique solution 
$$
A\in \solsp {0}{\tau_0}{s+9}\cap \solsp {1}{\tau_0}{s+6} \cap \solsp {2}{\tau_0}{s+3}\cap \solsp {3}{\tau_0}{s}
$$
such that 
$$
\p \xi^{-1}\p \eta A \in \solsp 0 {\tau_0}{s+8} \cap \solsp 1 {\tau_0}{s+5} 
\cap \solsp 2 {\tau_0}{s+2}
$$
and  
$$
\p \xi^{-2}\p \eta^2  A \in \solsp {0}{\tau_0}{s+6} \cap \solsp 1 {\tau_0}{s+3}.
$$
\end{lemma}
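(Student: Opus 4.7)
The plan is to combine Proposition \ref{KPIIwellpos} with a bootstrap argument based on the KP-II equation itself, exploiting the compatibility structure encoded in the third hypothesis.

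First, I apply Proposition \ref{KPIIwellpos} with $s$ replaced by $s+3$, using the first two hypotheses $A_0 \in H^{s+9}(\R^2)$ and $\partial_\xi^{-2}\partial_\eta^2 A_0 \in H^{s+6}(\R^2)$. This immediately produces, on some interval $[-\tau_0,\tau_0]$, the regularity
\[
A \in C^0(H^{s+9}) \cap C^1(H^{s+6}) \cap C^2(H^{s+3}), \qquad \partial_\xi^{-1}\partial_\eta A \in C^0(H^{s+8}) \cap C^1(H^{s+5}),
\]
which accounts for the first three Sobolev levels of $A$ and the top two levels of $\partial_\xi^{-1}\partial_\eta A$.

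To obtain the additional time derivative, I set $B := \partial_\tau A$, which satisfies the linearized KP-II equation $\partial_\tau B + 2\partial_\xi(AB) + \partial_\xi^3 B + \partial_\xi^{-1}\partial_\eta^2 B = 0$. Using the frequency-space interpolation $|k_2|^2/|k_1| \le \tfrac{1}{2}(k_2^2/k_1^2 + k_2^2)$, the first two hypotheses imply $B|_{\tau=0} \in H^{s+6}$. Applying $\partial_\xi^{-2}\partial_\eta^2$ to the KP-II equation at $\tau=0$ yields
\[
\partial_\xi^{-2}\partial_\eta^2 B\big|_{\tau=0} = -\partial_\xi^{-1}\partial_\eta^2\bigl[\partial_\xi^{-2}\partial_\eta^2 A_0 + A_0^2\bigr] - \partial_\xi\partial_\eta^2 A_0,
\]
which lies in $H^{s+3}$ precisely by virtue of the third hypothesis. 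A linear analogue of Proposition \ref{KPIIwellpos} applied to the variable-coefficient equation for $B$ then delivers $B \in C^0(H^{s+6}) \cap C^1(H^{s+3}) \cap C^2(H^s)$ and $\partial_\xi^{-1}\partial_\eta B \in C^0(H^{s+5}) \cap C^1(H^{s+2})$, which translates to the claimed $A \in C^3(H^s)$ and $\partial_\xi^{-1}\partial_\eta A \in C^2(H^{s+2})$.

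For the remaining estimates on $C := \partial_\xi^{-2}\partial_\eta^2 A$, I apply $\partial_\xi^{-2}\partial_\eta^2$ to the KP-II equation to obtain $\partial_\tau C + \partial_\xi^{-1}\partial_\eta^2(A^2 + C) + \partial_\xi\partial_\eta^2 A = 0$. Since $\partial_\xi^{-1}\partial_\eta^2$ is skew-adjoint on $L^2(\R^2)$, the $C$-self-interaction drops out of the $H^{s+6}$ energy identity, which then closes by Gronwall's inequality once the forcing $\partial_\xi^{-1}\partial_\eta^2(A^2) + \partial_\xi\partial_\eta^2 A$ is bounded in $H^{s+6}$ on $[-\tau_0, \tau_0]$. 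The second term is controlled directly by $A \in C^0(H^{s+9})$; the first is bounded at $\tau=0$ by the third hypothesis and is propagated in time via the evolution equation for $A^2$ coming from step one. A parallel $H^{s+3}$ estimate for $\partial_\tau C$ then yields $C \in C^1(H^{s+3})$. The main difficulty is controlling the nonlocal operator $\partial_\xi^{-1}\partial_\eta^2$ at every order of time differentiation: each application of $\partial_\tau$ to the equation effectively introduces an additional $\partial_\xi^{-1}\partial_\eta^2$ modulo smoother terms, which would otherwise degrade the Sobolev index of the antiderivative quantities. The third hypothesis is tailored to absorb exactly this degradation at the order $H^{s+3}$ needed for $\partial_\tau^3 A \in C^0(H^s)$, while the skew-adjointness of $\partial_\xi^{-1}\partial_\eta^2$ is what allows the energy method to propagate this regularity in time without any loss.
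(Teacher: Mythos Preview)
Your overall strategy---bootstrap via Proposition~\ref{KPIIwellpos} at level $s+3$, then control time derivatives and antiderivatives separately---is close in spirit to the paper's, but the energy argument for $C=\partial_\xi^{-2}\partial_\eta^2 A$ has a real gap. Your $H^{s+6}$ energy identity requires the forcing $\partial_\xi^{-1}\partial_\eta^2(A^2)$ to lie in $C^0([-\tau_0,\tau_0],H^{s+6})$, and this is not available: the third hypothesis only places the \emph{sum} $\partial_\xi^{-1}\partial_\eta^2[C_0+A_0^2]$ in $H^{s+3}$ (not $H^{s+6}$, and not either summand separately), and there is no evident mechanism in your ``evolution equation for $A^2$'' by which $\partial_\xi^{-1}\partial_\eta^2(A^2)$ gains the three missing derivatives or is even well-defined in any Sobolev space. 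The skew-adjointness of $\partial_\xi^{-1}\partial_\eta^2$ kills the $C$-self-interaction, but it does nothing for this genuinely nonlocal source term.

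The paper circumvents exactly this obstruction by introducing $\tilde D := C + A^2$, which satisfies the linear inhomogeneous equation
\[
\partial_\tau\tilde D + \partial_\xi^3\tilde D + \partial_\xi^{-1}\partial_\eta^2\tilde D = \partial_\tau(A^2) + \partial_\xi^3(A^2),
\]
so that the bad term $\partial_\xi^{-1}\partial_\eta^2(A^2)$ is absorbed into the skew-adjoint generator and the forcing becomes \emph{local}, hence in $C^0(H^{s+6})$ by the step-one regularity of $A$. Duhamel with the unitary group $S(\tau)=e^{\tau(\partial_\xi^3+\partial_\xi^{-1}\partial_\eta^2)}$ then gives $\tilde D\in C^0(H^{s+6})$ immediately; differentiating the Duhamel formula in $\tau$ is precisely where the third hypothesis enters, since it says $\Omega\tilde D_0 = (\partial_\xi^3+\partial_\xi^{-1}\partial_\eta^2)\tilde D_0\in H^{s+3}$. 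The paper then reads off $\partial_\tau^3 A$ and $\partial_\xi^{-1}\partial_\tau^2 A$ directly from the KP-II equation and its $\tau$-derivatives, avoiding your appeal to an unstated linear analogue of Proposition~\ref{KPIIwellpos} for the variable-coefficient equation satisfied by $B=\partial_\tau A$.
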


\begin{proof}	
Assume that $A$ solves the Cauchy problem \eqref{KPIInorm} and set 
$D := \p \xi^{-2}\p \eta ^2 A$. By Proposition \ref{KPIIwellpos}, since the initial data satisfies $A_0\in H^{s+9}\lb \R^2\rb$ and $\p \xi ^{-2}\p \eta ^2 A_0 \in H^{s+6}\lb \R^2\rb $ with $s \geq 0$, the KP-II equation has a solution \begin{equation}
\label{sol-kp}
A\in \solsp {0}{\tau_0}{s+9}\cap \solsp {1}{\tau_0}{s+6} \cap \solsp {2}{\tau_0}{s+3}
\end{equation}
such that $\p \xi^{-1}\p \eta  A \in \solsp 0 {\tau_0}{s+8} \cap \solsp 1 {\tau_0}{s+5}$. Taking $\p \eta^2 \lb\cdot\rb$ of the KP-II equation yields
	\[
	\p \xi^2\p \tau D+\p \xi\p \eta^2 \lb A^2\rb+\p \xi^5 D+\p \xi\p \eta^2 D = 0.
	\]
Setting $\tilde D := D + A^2$ and taking $\p \xi^{-2}\lb \cdot\rb $ yields the evolution equation
\begin{equation}
\label{eq-D-tilde}
	\p \tau \tilde D + \p \xi^3 \tilde D + \p \xi^{-1}\p \eta^2 \tilde D = \p \tau \lb A^2\rb + \p \xi^3 \lb A^2 \rb.
\end{equation}

Let us define $\Omega :=  \p \xi^{-1}\p \eta^2+\p \xi^3$ and $S(\tau) :=e^{\tau\Omega}$. Since $\Omega$ is skew-adjoint, the evolution operator $S\lb \tau\rb$ is unitary in $L^2(\R^2)$. Using Duhamel's principle we can write the evolution equation (\ref{eq-D-tilde}) in the integral form:
	\[
	\tilde D\lb\tau\rb=S(\tau)\tilde D_0 +\int_0^\tau S(\tau-s) \left[ \p s \lb A^2\rb + \p \xi^3 \lb A^2 \rb \right](s) ds,
	\]
	where $\tilde{D}_0 = \p \xi^{-2}\p \eta ^2 A_0 + A_0^2$.
	Expressing back $D=\tilde D -A$ gives us an integral equation for $D(\tau)$ of the form
\begin{equation}
\label{eq-D-exact}
	D\lb\tau\rb=S(\tau)\tilde D_0 -A(\tau)^2+\int_0^\tau S(\tau-s) \left[ \p s \lb A^2\rb + \p \xi^3 \lb A^2 \rb \right](s) ds.
\end{equation}
Since $\tilde{D}_0 \in H^{s+6}(\R^2)$ and $S(\tau)$ is unitary in $L^2(\R^2)$, 
the local solution (\ref{sol-kp}) yields $D \in \solsp 0 {\tau_0}{s+6}$.

Taking the time derivative of the integral equation (\ref{eq-D-exact}), using $$
\frac d {d\tau} S(\tau-s)=-\frac d {ds}S(\tau-s),
$$ 
and integrating by parts, we obtain
	\begin{align*}
	\p \tau D(\tau)=& S(\tau) \left[ \Omega\tilde D_0 + 2A_0 \p \tau A_0 + \p \xi^3 \lb A^2_0\rb \right] -2A(\tau) \p \tau A(\tau) \\
	&+ \int_0^\tau S\lb \tau-s\rb  \p s \left[ \p s \lb A^2\rb+ \p \xi^3 \lb A^2 \rb \right](s)  ds.
	\end{align*}
	Since $\Omega \tilde D_0 \in H^{s+3}(\R^2)$, then the solution (\ref{sol-kp}) satisfies $\p \tau D\in \solsp 0 {\tau_0} {s+3}$, which gives $\p \xi^{-2}\p \eta^2 A \in \solsp {1}{\tau_0}{s+3}$. 
	
	It remains to control $\p \tau^3 A$ and $\p \xi^{-1}\p \eta \p \tau ^2A$, 
	which is achieved by computing the time derivatives of the KP-II equation:
	\begin{align*}
  \p \tau ^2A &=-\lb 2\p \xi \lb A \p \tau A \rb+\p \xi^3 \p \tau A + \p \xi^{-1}\p \eta^2 \p \tau A \rb, \\
	\p \tau ^3A&=- \lb 2\p \xi \lb \lb \p \tau A \rb^2 + A \p \tau^2 A \rb+\p \xi^3 \p \tau^2 A +\p \xi^{-1}\p \eta^2 \p \tau^2 A  \rb
	\end{align*}
It follows from (\ref{sol-kp}) that all but the last term in $\p \tau ^3A$ are in $ C^{0}\lb \lsb -\tau_0, \tau_0\rsb, H^{s}\rb$. By using the expression for $\p \tau^2 A$, we check that 
	\begin{align}
		\label{A-der}
	\p \xi^{-1} \p \tau ^2A = -\lb 2 A\p \tau A +\p \xi^2 \p \tau A+ \p \tau D \rb,
	\end{align}
so that $\p \xi^{-1} \p \tau^2 A \in C^{0}\lb \lsb -\tau_0, \tau_0\rsb, H^{s+3}\rb$, which yields $A \in \solsp {3}{\tau_0}{s}$ and $\p \xi^{-1}\p \eta A \in \solsp 2 {\tau_0}{s+2}$.
\end{proof}

Next we derive a useful bound on the $\ell^2(\mathbb{Z}^2)$ norm of a function  expressed in terms of the slowly varying solution of the KP-II equation defined in $H^s(\R^2)$. A similar result for one-dimensional chains was obtained in \cite{DP} (see also \cite{SW1} for earlier work). 

\begin{proposition}\cite{DP}
	Let $u_{j}=U(\eps j)$, where $U\in H^1(\R)$. There is a constant $C>0$ such that for every $\eps\in (0,1]$ we have 
	\begin{equation}
	\label{bound-DP}
	\norm u_{\ell^2(\Z)}\leq  C \eps^{-\frac 1 2} \norm U_{H^1(\R)},\qquad \forall U\in H^1\lb \R\rb.
	\end{equation}
	\label{prop-mell}
\end{proposition}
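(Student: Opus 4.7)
The plan is to prove the inequality by a local Sobolev embedding argument: bound $|U(\eps j)|^2$ pointwise in terms of integrals of $|U|^2$ and $|U'|^2$ over the interval $I_j := [\eps j, \eps (j+1)]$, then sum over $j$ using that the intervals $\{I_j\}_{j \in \Z}$ partition $\R$ (up to a set of measure zero). The crucial gain of $\eps^{-1/2}$ over the Sobolev embedding $H^1(\R) \hookrightarrow L^\infty(\R)$ comes from averaging rather than taking a supremum.

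First I would use the fundamental theorem of calculus: for any $x, y \in I_j$,
\[
U(x) = U(y) + \int_y^x U'(s)\,ds.
\]
Combining $(a+b)^2 \leq 2a^2 + 2b^2$ with the Cauchy--Schwarz inequality applied to the integral, and using that $|x - y| \leq \eps$, gives
\[
|U(x)|^2 \leq 2 |U(y)|^2 + 2\eps \int_{I_j} |U'(s)|^2 ds.
\]

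Next I would set $x = \eps j$ and integrate in $y$ over $I_j$, which has length $\eps$, to obtain
\[
\eps |U(\eps j)|^2 \leq 2 \int_{I_j} |U(y)|^2\,dy + 2 \eps^2 \int_{I_j} |U'(s)|^2\,ds.
\]
Dividing by $\eps$ and summing over $j \in \Z$, and using that the intervals $I_j$ tile $\R$, yields
\[
\|u\|_{\ell^2(\Z)}^2 \;=\; \sum_{j \in \Z} |U(\eps j)|^2 \;\leq\; \frac{2}{\eps} \|U\|_{L^2(\R)}^2 + 2 \eps \|U'\|_{L^2(\R)}^2 \;\leq\; \frac{2(1+\eps^2)}{\eps} \|U\|_{H^1(\R)}^2.
\]
Finally, since $\eps \in (0,1]$ we have $1 + \eps^2 \leq 2$, so taking square roots produces the claim with $C = 2$.

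There is no real obstacle here; the only small care needed is in choosing a family of intervals of length $\eps$ containing the sample point $\eps j$ that partition $\R$, so that the right-hand side collapses to a single global integral of $|U|^2 + |U'|^2$. The identical argument extends to two (or higher) dimensions by applying it in each coordinate direction and producing the bound required for Lemma \ref{lmell2}, with the loss of $\eps^{-3/2}$ arising from one factor of $\eps^{-1/2}$ in $\xi = \eps j$ and one factor of $\eps^{-1}$ in $\eta = \eps^2 k$.
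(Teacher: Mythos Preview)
Your proof is correct and self-contained; the explicit constant $C=2$ is a nice bonus. Note, however, that the paper does not itself prove this proposition---it is quoted from \cite{DP} without argument. What the paper does prove is the two-dimensional analogue, Lemma~\ref{lmell2}, and there it takes a completely different route: the discrete Fourier transform of $u$ on $\Z^2$ is related to the continuous Fourier transform of $U$ via a Poisson-type unfolding, Parseval converts $\|u\|_{\ell^2}^2$ into an integral over the torus, and inserting weights $\langle \pi m/\eps,\pi n/\eps^2\rangle^{\pm 2s}$ together with Young's inequality produces the $H^s$ bound for any $s>1$.

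Your real-space averaging argument is more elementary and, as you say, iterates coordinate-by-coordinate. One caveat on your last remark: the tensor-product extension to $\R^2$ forces control of the mixed derivative $\partial_\xi\partial_\eta U$, so it delivers Lemma~\ref{lmell2} only at the level $s=2$ rather than for every $s>1$. For the applications in this paper (where the solution is taken in $H^{s+9}$ with $s\geq 0$) that loss is entirely harmless, but the Fourier approach is genuinely sharper at the regularity threshold and is the natural one if one wants the full range $s>1$ stated in Lemma~\ref{lmell2}.
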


The following lemma generalizes Proposition \ref{prop-mell} 
for two-dimensional square lattices.

\begin{lemma}\label{lmell2}
	Let $u_{j,k}=U(\eps j,\eps^2 k)$, where $U\in H^s(\R^2)$ with fixed $s>1$. There is a constant $C_s>0$ that depends on $s$ such that for every $\eps\in (0,1]$ we have 
	\begin{equation}
	\label{mell2}
	\norm u_{\ell^2(\Z^2)}\leq  C_s \eps^{-\frac 3 2} \norm U_{H^s(\R^2)},\qquad \forall U\in H^{s}\lb \R^2\rb.
	\end{equation}
\end{lemma}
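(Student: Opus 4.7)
The plan is to reduce the bound to an $H^s(\R^2) \hookrightarrow \ell^2(\Z^2)$ sampling embedding after an anisotropic rescaling. First I would introduce
\[
\tilde U(x, y) := U(\eps x, \eps^2 y), \qquad (x, y) \in \R^2,
\]
so that $u_{j, k} = \tilde U(j, k)$ is simply the restriction of $\tilde U$ to the integer lattice. A direct Fourier computation gives $\hat{\tilde U}(p, q) = \eps^{-3} \hat U(p/\eps, q/\eps^2)$, and the substitution $P = p/\eps$, $Q = q/\eps^2$ then yields
\[
\|\tilde U\|_{H^s(\R^2)}^2 = \eps^{-3} \int_{\R^2} \lb 1 + \eps^2 P^2 + \eps^4 Q^2\rb^s |\hat U(P, Q)|^2 \, dP \, dQ \leq \eps^{-3} \|U\|_{H^s(\R^2)}^2
\]
for every $\eps \in (0, 1]$ and every $s \geq 0$, which supplies the factor $\eps^{-3}$ in \eqref{mell2}.

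The remaining ingredient is the sampling estimate $\sum_{j, k \in \Z^2} |\tilde U(j, k)|^2 \leq C_s \|\tilde U\|_{H^s(\R^2)}^2$ for $s > 1$, which I would establish by Poisson summation. Setting
\[
F(\theta, \phi) := \sum_{(m, n) \in \Z^2} \hat{\tilde U}(\theta + 2\pi m, \phi + 2\pi n), \qquad (\theta, \phi) \in [0, 2\pi]^2,
\]
Fourier inversion identifies $\tilde U(j, k)$ as the $(j, k)$-th Fourier coefficient of the $2\pi$-periodic function $F$. Parseval's identity gives
\[
\sum_{j, k \in \Z^2} |\tilde U(j, k)|^2 = \frac{1}{(2\pi)^2} \int_{[0, 2\pi]^2} |F(\theta, \phi)|^2 \, d\theta \, d\phi,
\]
and Cauchy--Schwarz applied with the weight $\jb{(\theta + 2\pi m, \phi + 2\pi n)}^{-2s}$ inside the lattice sum delivers the sampling estimate, provided that $\sum_{(m, n) \in \Z^2} \jb{(\theta + 2\pi m, \phi + 2\pi n)}^{-2s}$ is uniformly bounded for $(\theta, \phi) \in [0, 2\pi]^2$. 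Composition with the rescaling step yields \eqref{mell2}.

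The crux is the uniform boundedness of this weight sum, which converges if and only if $2s > 2$; this pins the threshold at $s > 1$ and is the two-dimensional analogue of the $s > 1/2$ threshold in Proposition \ref{prop-mell}. An iterated one-dimensional route based on Proposition \ref{prop-mell} alone would give only an $s \geq 2$ bound: applying it in $\xi$ for each fixed $k$ requires $U(\cdot, \eps^2 k) \in H^1(\R)$ (costing a half derivative by trace theory), and bounding the resulting $k$-sum requires an additional $\eta$-derivative on $\|U(\cdot, \eta)\|_{H^1(\R)}^2$, so the Fourier approach is both sharper and more direct.
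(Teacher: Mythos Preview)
Your argument is correct and follows essentially the same route as the paper: periodize the Fourier transform, apply Parseval, and control the lattice sum by Cauchy--Schwarz with a $\jb{\cdot}^{-2s}$ weight whose summability forces $s>1$. The only difference is organizational --- you first rescale to reduce to the $\eps$-independent sampling estimate $H^s(\R^2)\hookrightarrow \ell^2(\Z^2)$ and then pay the $\eps^{-3/2}$ via the anisotropic change of variables, whereas the paper keeps $\eps$ in the computation throughout and uses the slightly cruder $(\theta,\phi)$-independent weight $\jb{\pi m/\eps,\pi n/\eps^2}^{-2s}$; your factorization is a bit cleaner but the substance is identical.
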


\begin{proof}
We use the discrete Fourier transform defined by 
	\begin{equation*}
	\uh \theta \phi = \ds_{(j,k)\in \Z^2} u_{j,k} e^{-i(j\theta + k\phi)}
		\end{equation*}
with the inverse transform given by	
	\begin{equation}\label{FT1}
	u_{j,k} = \frac{1}{(2\pi)^2}\int_{-\pi}^\pi \int_{-\pi}^\pi \uh \theta \phi e^{i(j\theta + k \phi)} d\theta d\phi.
	\end{equation}
Since $U\in H^s(\R^2)$ and $s>1$, the Fourier transform of $U$ denoted by $\hat{U}$ can be defined through the standard formula. 
We represent 
	\begin{align}\label{FT2}
	\begin{split}
	u_{j,k}&=U(\eps j, \eps^2 k) = \frac{1}{(2\pi)^2} \rint \rint \hat U(\tilde p, \tilde q)e^{i(\eps j\tilde p+\eps^2 k\tilde q)}d\tilde pd\tilde q\\
	&=\frac{1}{(2\pi)^2\eps^3}\rint \rint \hat U \left(\frac p \eps, \frac q {\eps^2}\right)e^{i(jp+kq)}dpdq\\
	&=\frac{1}{(2\pi)^2\eps^3} \sum_{(n,m)\in \Z^2}\int_{(2n-1)\pi}^{(2n+1)\pi}\int_{(2m-1)\pi}^{(2m+1)\pi}\hat U \left(\frac p \eps, \frac q {\eps^2}\right)e^{i(jp+kq)}dpdq\\
	&=\frac{1}{(2\pi)^2\eps^3} \sum_{(n,m)\in \Z^2} \sint \pi \sint \pi\hat U \left(\frac {\theta+2\pi m} \eps, \frac {\phi +2\pi n} {\eps^2}\right)e^{i(j\theta+k\phi)}d\theta d\phi.
	\end{split}
	\end{align}
For any finite subset $\Lambda \subset \Z^2$ we have 
	\begin{align*}
	\sum_{(n,m)\in \Lambda} \sint \pi \sint \pi \left|\hat U \left(\frac {\theta+2\pi m} \eps, \frac {\phi +2\pi n} {\eps^2}\right)\right|d\theta d\phi \leq \rint \rint \left|\hat U \left(\frac p \eps, \frac q {\eps^2}\right)\right|dp dq,
	\end{align*}
	hence 
	\[
	\ds_{(n,m)\in \Z^2} \sint \pi \sint \pi\left|\hat U \left(\frac {\theta+2\pi m} \eps, \frac {\phi +2\pi n} {\eps^2}\right)\right|d\theta d\phi
	\leq \| \hat U \|_{L^1(\R^2)},
	\] 
	where $\| U \|_{L^1(\R^2)} < \infty$ if $U \in H^s(\R^2)$ with $s > 1$.
	Then we can interchange summation and integration in \eqref{FT2} by the Fubini-Tonelli theorem. Comparing \eqref{FT1} and \eqref{FT2} yields
	\begin{align*}
	\uh \theta \phi=\frac 1 {\eps^3}\sum_{(n,m)\in \Z^2} \hat U \left(\frac {\theta+2\pi m} \eps, \frac {\phi +2\pi n} {\eps^2}\right).
	\end{align*}
Parseval's identity yields
	\begin{align*}
	\|u\|_{\ell^2(\Z^2)}^2 &= \frac{1}{(2\pi)^2\eps^6}\sint \pi \sint \pi \left| \sum_{(n,m)\in \Z^2}\hat U \left(\frac {\theta+2\pi m} \eps, \frac {\phi +2\pi n} {\eps^2}\right) \right|^2 d\theta d\phi\\
	&\leq
	\frac{1}{(2\pi)^2\eps^6}  \sum_{\substack{ (n_1,m_1)\in \Z^2\\(n_2,m_2)\in \Z^2}} \sint \pi \sint \pi \left|\hat U \left(\frac {\theta+2\pi m_1} \eps, \frac {\phi +2\pi n_1} {\eps^2}\right) \right| \\
	&\phantom{\leq\frac{1}{(2\pi)^2\eps^6}\sint \pi \sint \pi  \sum_{\substack{ (n_1,m_1)\in \Z^2\\(n_2,m_2)\in \Z^2}}}\times\left|\hat U \left(\frac {\theta+2\pi m_2} \eps, \frac {\phi +2\pi n_2} {\eps^2}\right) \right| d\theta d\phi.
	\end{align*}
	Denote $\jb {x,y}_2=\sqrt {1+x^2+y^2}$. Inserting the weights $\jb{\frac{\pi m_1}{\eps},\frac{\pi n_1}{\eps^2}}^{-2s}_2$ and $\jb{\frac{\pi m_2}{\eps},\frac{\pi n_2}{\eps^2}}^{-2s}_2$, then applying Young's inequality, $ab\leq \frac 1 2 a^2 +\frac 1 2 b^2$, yields
	\begin{align*}
	\|u\|_{\ell^2(\Z^2)}^2 
	 & \leq \frac{1}{(2\pi)^2\eps^6}\sum_{\substack{ (n_1,m_1)\in \Z^2\\(n_2,m_2)\in \Z^2}} \jb{\frac{\pi m_1}{\eps},\frac{\pi n_1}{\eps^2}}^{-2s}_2\jb{\frac{\pi m_2}{\eps},\frac{\pi n_2}{\eps^2}}^{-2s}_2\\
	&\qquad \times\left( 	\sint \pi \sint \pi \right. \frac 1 2\jb{\frac{\pi m_1}{\eps},\frac{\pi n_1}{\eps^2}}^{4s}_2\left|\hat U \left(\frac {\theta+2\pi m_1} \eps, \frac {\phi +2\pi n_1} {\eps^2}\right) \right|^2 d\theta d\phi \\
	&\qquad \qquad +\left. \sint \pi \sint \pi\frac 1 2\jb{\frac{\pi m_2}{\eps},\frac{\pi n_2}{\eps^2}}^{4s}_2 \left|\hat U \left(\frac {\theta+2\pi m_2} \eps, \frac {\phi +2\pi n_2} {\eps^2}\right) \right|^2 d\theta d\phi  \right).
	\end{align*}
	Hence, by symmetry of coefficients, we obtain:
{\small	\begin{align*}
	\|u\|_{\ell^2(\Z^2)}^2 \leq \frac{1}{(2\pi)^2\eps^6}&\left(\sum_{(n_1,m_1)\in \Z^2} \jb{\frac{\pi m_1}{\eps},\frac{\pi n_1}{\eps^2}}^{-2s}_2\right)\\\times&\left(\sum_{(n_2,m_2)\in \Z^2} \sint \pi \sint \pi\jb{\frac{\pi m_2}{\eps},\frac{\pi n_2}{\eps^2}}^{2s}_2 \left|\hat U \left(\frac {\theta+2\pi m_2} \eps, \frac {\phi +2\pi n_2} {\eps^2}\right) \right|^2 d\theta d\phi\right).
	\end{align*}}
	When $\eps\in (0,1]$ the double series in first term in $\norm{u}^2_{\ell^2(\Z)}$ converges for $s>1$ by the integral test, hence $\exists C_s>0$ so that 
	$$
	\sum_{(n_1,m_1)\in \Z^2} \jb{\frac{\pi m_1}{\eps},\frac{\pi n_1}{\eps^2}}^{-2s}_2<C_s^2.
	$$	
The second term in $\norm{u}^2_{\ell^2(\Z)}$ is related to the $H^s$ norm of $U$ given by,	
	\begin{align*}
	\|U\|_{H^s}^2 &= \frac{1}{(2\pi)^2}\rint \rint \jb{\tilde p,\tilde q}^{2s}_2\left|\hat U(\tilde p, \tilde q)\right|^2d\tilde pd\tilde q\\
	&= \frac{1}{(2\pi)^2\eps^3}\sum_{(n_2,m_2)\in \Z^2}\sint \pi \sint \pi\jb{\frac{\theta+2\pi m_2}{\eps},\frac{\phi+2\pi n_2}{\eps^2}}^{2s}_2 \\
	&\phantom{= \frac{1}{(2\pi)^2\eps^3}\sum_{(n_2,m_2)\in \Z^2}\sint \pi \sint \pi}\times\left|\hat U \left(\frac {\theta+2\pi m_2} \eps, \frac {\phi +2\pi n_2} {\eps^2}\right) \right|^2 d\theta d\phi.
	\end{align*}
Since 
$$
\jb{\frac{\pi m_2}{\eps},\frac{\pi n_2}{\eps^2}}^{2s}_2 \leq
\jb{\frac{\theta+2\pi m_2}{\eps},\frac{\phi+2\pi n_2}{\eps^2}}^{2s}_2, 
\quad \forall \theta,\phi \in [-\pi,\pi],
$$ 
the second term in $\norm{u}^2_{\ell^2(\Z)}$ is bounded above by $(2\pi)^2\eps^3 \| U \|^2_{H^s}$. Hence, we obtain 
$\|u\|_{\ell^2(\Z^2)}^2 \leq \eps^{-3}C_s^2 \| U \|^2_{H^s}$ which yields 
(\ref{mell2}).
\end{proof}

\section{Proof of Theorem \ref{theorem-horiz}}
\label{sec-proof-1}

Here we use the results of Lemmas \ref{welpos} and \ref{lmell2} in order to prove Theorem \ref{theorem-horiz}. We start by writing equations of motions in terms of the strain variables (\ref{strain-variable}):
	\begin{equation}\label{eom}
\begin{alignedat}{1}
\dot u^{(1)}_{j,k}=&w_{j+1,k}-w_{j,k},\\
\dot u^{(2)}_{j,k}=&w_{j,k+1}-w_{j,k},\\
\dot v^{(1)}_{j,k}=&z_{j+1,k}-z_{j,k},\\
\dot v^{(2)}_{j,k}=&z_{j,k+1}-z_{j,k},\\
\dot w_{j,k} = &c_1^2\lb\ux j k - \ux {j-1} k\rb + c_2^2\lb\uy j k - \uy {j}{k-1}\rb\\
&+\alpha_1 \left[ \lb \ux j k\rb^2-\lb \ux{j-1}k\rb^2\right]\\
&+\alpha_2 \left[\uy j k \vy j k - \uy j{k-1}\vy j {k-1}+\frac 1 2 \lb\vx j k\rb^2 - \frac 1 2 \lb \vx{j-1}k\rb^2\right]\\
\dot z_{j,k} =&c_1^2\lb\vy j k - \vy {j} {k-1}\rb + c_2^2\lb\vx j k - \vx {j-1}{k}\rb, \\
&+\alpha_1 \left[ \lb \vy j k\rb^2-\lb \vy{j}{k-1}\rb^2\right]\\
&+\alpha_2 \left[\ux j k \vx j k - \ux {j-1}{k}\vx {j-1} {k}+\frac 1 2 \lb\uy j k\rb^2 - \frac 1 2 \lb \uy{j}{k-1}\rb^2\right],
\end{alignedat}
\end{equation}
where $w_{j,k} := \dot{x}_{j,k}$, $z_{j,k} := \dot{y}_{j,k}$, and $(j,k) \in \Z^2$. The justification procedure is divided into the following four steps. 

\subsection{Step 1. Decomposition} 

Let us use the following decomposition,
\begin{equation}\label{eq:decom}
\begin{alignedat}{2}
\ux j k &= \eps^2\At + && \eps^2\Ux j k\\
\uy j k &=  \eps^2U_\eps\lb \xi, \eta, \tau\rb + && \eps^2\Uy j k\\
\vx j k &=  && \eps^2\Vx j k\\
\vy j k &=  && \eps^2\Vy j k\\
w_{j,k}&=  \eps^2 W_\eps\lb \xi,\eta,\tau\rb + && \eps^2W_{j,k}\\
z_{j,k}&= && \eps^2Z_{j,k}
\end{alignedat}
\end{equation}
where $\xi = \eps\lb j-c_1t\rb$, $\eta =\eps^2  k$, and $\tau=\eps^3 t$. The leading-order function $A$ is defined as a suitable solution to the KP-II equation \eqref{KPII}, whereas the $\eps$-dependent functions $U_\eps$ and $W_\eps$ are introduced to eliminate the lower order terms in $\eps$ arising from time derivatives and finite differences of $A$ in the first and second equations of system (\ref{eom}).

We denote the error terms of the formal power $\eps^5$ by $\mathcal{O}(\eps^5)$ and define $W_{\eps}$ and $U_{\eps}$ from the following equations:
\begin{align}\label{WxR}
W_\eps\lb \xi + \eps, \eta \rb -W_\eps\lb \xi , \eta \rb = -\eps c_1 \p\xi A(\xi,\eta) + \eps^3 \p \tau A(\xi,\eta) + \mathcal{O}(\eps^5)
\end{align}
and 
\begin{align} \label{Ut}
\ye {W_\eps} + - W_{\eps}\lb \xi , \eta \rb = -\eps c_1 \p\xi  {U_\eps}(\xi,\eta) + \eps^3 \p \tau {U_\eps}(\xi,\eta) +  \mathcal{O}(\eps^5),
\end{align}
where the time dependence is dropped from the list of arguments. 

We look for an approximate solution to \eqref{WxR} in the form:
\begin{equation}\label{eq:Weps}
W_\eps=\We 0 +\eps \We 1 + \eps^2 \We 2+\eps^3 \We 3,
\end{equation}
where the functions $W^{(j)}$ depend on $(\xi,\eta)$ and decay to zero at infinity. Plugging  \eqref{eq:Weps} into \eqref{WxR} and expanding each $W^{(j)}$ in Taylor series, we get
\begin{align*}
&\eps \p\xi\We 0 + \frac {1}{2} \eps^2\p \xi ^2\We 0 + \frac {1}{6} \eps^3 \p \xi ^3\We 0+ \frac {1}{24} \eps^4 \p \xi ^4\We 0%+\eps^5\frac{1}{5!} \p \xi^5 \We 0
\\& \qquad +\eps^2\p \xi\We 1 + \frac {1}{2} \eps^3\p \xi ^2\We 1 + \frac {1}{6} \eps^4 \p \xi ^3\We 1 +\eps^3\p \xi \We 2 + \frac {1}{2} \eps^4 \p \xi ^2\We 2
\\& \qquad  \qquad 
+\eps^4\p \xi\We 3 
=-\eps c_1 \p\xi A + \eps^3 \p \tau A +  \mathcal{O}(\eps^5).
\end{align*}
Grouping terms by their orders in powers of $\eps$ yields a sequence of equations with their relevant solutions:
\begin{align}\label{eq:Wesol}
\begin{split}
& \mathcal{O}(\eps) : \quad \p\xi\We 0 = -c_1 \p \xi A\\
&\phantom{O(\eps): \p\xi\We 0 } \qquad \implies \We 0 = -c_1 A\\
& \mathcal{O}(\eps^2) : \quad \frac {1}2 \p \xi ^2\We 0+\p \xi \We 1 = 0 \\
&\phantom{O(\eps): \p\xi\We 0 } \qquad  \implies \We 1 = \frac {c_1}2 \p \xi A\\
& \mathcal{O}(\eps^3) : \quad \frac 1 6 \p \xi ^3 \We 0 + \frac 1 2 \p \xi ^2 \We 1 + \p \xi \We 2=\p \tau A  \\
&\phantom{O(\eps): \p\xi\We 0 } \qquad  \implies \We 2 = \p \xi^{-1}\p \tau A -\frac {c_1} {12}\p \xi^2 A\\
&  \mathcal{O}(\eps^4) : \quad \frac 1 {24}  \p \xi ^4\We 0 + \frac 1 6 \p \xi ^3 \We 1 + \frac 1 2\p \xi^2 \We 2+\p \xi \We 3=0 \\
&\phantom{O(\eps): \p\xi\We 0 } \qquad  \implies \We 3 = -\frac 1 2 \p \tau A.  \\
\end{split}
\end{align}
With the choice in \eqref{eq:Wesol}, this construction ensures that equation  \eqref{WxR} is satisfied up to and including the order of $\mathcal{O}(\eps^4)$. Substituting \eqref{eq:Wesol} into \eqref{eq:Weps} yields
	\begin{equation}\label{We}
W_\eps= -c_1 A+\eps \lb \frac{c_1}{2}\p \xi A \rb + \eps^2 \lb\p \xi^{-1} \p \tau A-\frac{c_1}{12} \p\xi^2 A\rb-\eps^3 \lb \frac 1 2 \p \tau A \rb.
\end{equation}

Similarly, we look for an approximate solution to (\ref{Ut}) in the form:
\begin{align}\label{eq:Ueps}
U_\eps = \eps \Ue 1 + \eps^2 \Ue 2 + \eps^3 \Ue 3,
\end{align}	
where the functions $U^{(j)}$ depend on $(\xi,\eta)$ and decay to zero at infinity. Plugging \eqref{eq:Weps} and \eqref{eq:Ueps} into \eqref{Ut} and expanding each $W^{(j)}$ in Taylor series,  we get
\begin{align}\label{eq:UtRHS}
\begin{split}
& \eps^2\lb -c_1 \p \xi \Ue 1\rb+\eps^3 \lb -c_1 \p \xi \Ue 2\rb + \eps^4 \lb -c_1 \p \xi \Ue 3+\p \tau \Ue 1\rb\\
& \qquad = \eps^2 \p \eta \We 0 + \eps^3 \p \eta \We 1 +\eps^4 \lb \p \eta \We 2 +\frac 1 2 \p \eta^2 \We 0\rb + \mathcal{O}(\eps^5).
\end{split}	
\end{align}	
Grouping terms by their orders in powers of $\eps$ and using the values for $W_\eps$ found in \eqref{eq:Wesol}, we obtain a sequence of equations with their relevant solutions:
\begin{align}\label{eq:Uesol}
\begin{split}
& \mathcal{O}(\eps^2) : \quad -c_1 \p \xi \Ue 1 = - c_1 \p\eta A \\
&\phantom{O(\eps^2):-c_1 \p \eta U=\p\eta \We 0} \quad \implies  \Ue 1 = \p \xi^{-1}\p \eta A \\
& \mathcal{O}(\eps^3) : \quad -c_1 \p \xi \Ue 2 = \frac{c_1}{2} \p \xi\p \eta A \\
&\phantom{O(\eps^3): \frac{c_1}{2} \p \xi\p \eta U=\p\eta \We 1} \quad \implies  \Ue 2 = -\frac 1 2 \p \eta A\\
& \mathcal{O}(\eps^4) : \quad -c_1 \p \xi \Ue 3 +\p \tau \Ue 1 = 
\p \xi^{-1} \p\eta \p \tau A - \frac{c_1}{12} \p \xi^2 \p \eta A - \frac{c_1}{2} \p \eta^2 A \\
&\phantom{O(\eps^4) : \p\eta \We 2 + \frac 1 2\p \eta^2 \We 0} \quad \implies\Ue 3 = \frac 1 2 \p \xi^{-1}\p\eta^2 A +\frac{1}{12}\p \xi \p \eta A
\end{split}
\end{align}
Substituting \eqref{eq:Uesol} into \eqref{eq:Ueps} yields 
\begin{equation}\label{Ue}
U_\eps= 	\eps \p \xi^{-1}\p \eta A-\eps^2 \lb \frac 1 2 \p \eta A \rb 
+ \eps^3 \lb \frac 1 2 \p \xi^{-1}\p\eta^2 A+\frac{1}{12} \p \xi \p \eta  A\rb.
\end{equation} 

Substituting decomposition \eqref{eq:decom} into the first and second equations of system \eqref{eom} yield equations 
\begin{align}\label{U1}
\begin{split}
\dot U_{j,k}^{(1)} &= W_{j+1,k}-W_{j,k} + Res_{j,k}^{U^{(1)}},\\
\dot U^{(2)}_{j,k} &= W_{j,k+1}-W_{j,k}+Res^{U^{(2)}}_{j,k},
\end{split}
\end{align}
where
\begin{align*}
\begin{split}
Res_{j,k}^{U^{(1)}} &:=   c_1 \eps \p \xi A - \eps^3 \p \tau A + \xe {W_\eps} + - W_{\eps}\lb \xi, \eta \rb,\\
Res_{j,k}^{U^{(2)}} &:=  c_1 \eps \p\xi {U_\eps} - \eps^3 \p \tau {U_\eps}+\ye {W_\eps} + - W_{\eps}\lb \xi , \eta \rb.
\end{split}
\end{align*}
If expansions (\ref{We}) and (\ref{Ue}) are used, the residual terms have the formal order of $\mathcal{O}(\eps^5)$. The third and fourth equations of system (\ref{eom}) have not been changed:
\begin{align}
\label{U2}
\begin{split}
\dot V_{j,k}^{(1)} = Z_{j+1,k}-Z_{j,k}, \\
\dot V^{(2)}_{j,k}=Z_{j,k+1}-Z_{j,k}.
\end{split}
\end{align}

Finally, the last two equations of system (\ref{eom}) can be rewritten explicitly as
\begin{align}
\label{U3}
\begin{split}
\dot W_{j,k}&= c_1^2\lsb U^{(1)}_{j,k}-U^{(1)}_{j-1,k}\rsb+c_2^2 \lsb U^{(2)}_{j,k}-U^{(2)}_{j,k-1} \rsb\\
&\quad+  \alpha_1 \eps^2 \lsb 2A U^{(1)}_{j,k}-2\xe{A} - U^{(1)}_{j-1,k}+\lb U_{j,k}^{(1)}\rb ^2-\lb U_{j-1,k}^{(1)}\rb ^2\rsb \\
&\quad+ \alpha_2 \eps^2 \lsb U_\eps\lb \xi,\eta\rb V^{(2)}_{j,k} - \ye {U_\eps} - V^{(2)}_{j,k-1} + U^{(2)}_{j,k} V^{(2)}_{j,k} - U^{(2)}_{j,k-1}  V^{(2)}_{j,k-1} \rsb\\
&\quad+\alpha_2 \eps^2\lsb\frac 1 2 \lb V^{(1)}_{j,k}\rb^2 - \frac 1 2 \lb V^{(1)}_{j-1,k}\rb^2\rsb + Res^W_{j,k}\\
\dot Z_{j,k}&=c_2^2\lsb V_{j,k}^{(1)}-V_{j-1,k}^{(1)}\rsb + c_1^2 \lsb V_{j,k}^{(2)}-V_{j,k-1}^{(2)}\rsb\\%1
&\quad+\alpha_2 \eps^2 \lsb U_\eps\lb \xi, \eta\rb U_{j,k}^{(2)}-U_\eps\lb \xi, \eta-\eps^2\rb U_{j,k-1}^{(2)} + \frac 1 2\lb U_{j,k}^{(2)}\rb ^2-\frac 1 2\lb U_{j,k-1}^{(2)}\rb ^2 \rsb\\
&\quad+\alpha_2 \eps^2 \lsb A \lb \xi,\eta\rb V_{j,k}^{(1)}-\xe {A} - V_{j-1,k}^{(1)} + V^{(1)}_{j,k} U^{(1)}_{j,k}-V^{(1)}_{j-1,k} U^{(1)}_{j-1,k} \rsb\\
&\quad+\alpha_1 \eps^2\lsb\lb V^{(2)}_{j,k}\rb^2-\lb V^{(2)}_{j,k-1}\rb^2 \rsb + Res^Z_{j,k}
\end{split}
\end{align}
where
\begin{align*}
\begin{split}
Res^W_{j,k} := & c_1\eps \p \xi W_\eps - \eps^3 \p \tau W_\eps+ c_1^2 \lsb A \lb \xi, \eta\rb-\xe {A} -\rsb\\
& + c_2^2\lsb U_\eps\lb \xi,\eta\rb-\ye {U_\eps} - \rsb+\alpha_1\eps^2 \lsb A \lb \xi,\eta\rb^2 -  \xe {A} -^2\rsb, \\
Res^Z_{j,k} := & \frac{\alpha_2 \eps^2}{2}\lsb U_\eps \lb \xi, \eta\rb^2- \ye {U_\eps} -^2\rsb.
\end{split}
\end{align*}	
Expanding each term in $Res^W_{j,k}$ by using expansions (\ref{We}) and (\ref{Ue}) yields the following formal expansion
\begin{align*}
Res^W_{j,k}=\quad &\eps^3 \left[2c_1 \p \tau A + \frac{c_1^2}{12} \p \xi^3 A+ c_2^2 \p \xi^{-1}\p \eta^2 A+\alpha_1\p \xi \lb A ^2\rb \right]\\
-&\eps^4\left[c_1 \p \xi \p \tau A + \frac{c_1^2}{24} \p \xi^4 A +\frac {c_2^2}2 \p \eta^2 A + \frac{\alpha_1}{2} \p \xi^2 \lb A^2 \rb \right] + 
\mathcal{O}(\eps^5).
\end{align*}
If the function $A$ is a solution of the KP-II equation (\ref{KPII}), the residual term $Res^W$ has the formal order of $\mathcal{O}(\eps^5)$. It is also clear from expansion (\ref{Ue}) that $Res^Z$ has the formal order of $\mathcal{O}(\eps^6)$. 

\subsection{Step 2. Residual terms} 

The residual terms are handled by using Taylor's theorem. 
If $A$ is defined in Sobolev space $H^s(\mathbb{R}^2)$ with sufficiently large $s$, then we can estimate the residual terms in the $\ell^2(\mathbb{Z}^2)$ norm by an application of Lemma \ref{lmell2}. Since all residual terms have the formal order of $\mathcal{O}(\eps^5)$, we can obtain the 
bound of $\mathcal{O}(\eps^{\frac{7}{2}})$ on the residual terms in the $\ell^2(\mathbb{Z})$ norm.

The following lemma gives us estimates of the $\ell^2$-norm for the residual terms in equations (\ref{U1}) and (\ref{U3}). No residual terms appear in equations (\ref{U2}).

\begin{lemma}\label{lres}
Let $A \in C^0(\mathbb{R},H^s)$ be a solution to the KP-II equation \eqref{KPII}  with $s \geq 9$. There is a positive constant $C$ 
that depend on $A$ such that for all $\eps\in \lb 0, 1\rsb$, we have
	\begin{align}
	\label{bound-on-residual}
	\norm{Res^{U^{(1)}}_{j,k}}_{\ell^2}+\norm{Res^{U^{(2)}}_{j,k}}_{\ell^2} + \norm{Res^W_{j,k}}_{\ell^2}+\norm{Res^Z_{j,k}}_{\ell^2} &\leq C \eps^{\frac 7 2}.
	\end{align}
\end{lemma}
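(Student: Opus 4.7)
The plan is to bound each of the four residuals by $C\eps^{7/2}$ by combining two ingredients: first, show that each residual is formally of order $\eps^5$, with the remainder written via Taylor's formula with integral remainder; second, apply Lemma~\ref{lmell2} to pass from an $H^s(\R^2)$-estimate on the smooth prefactor to an $\ell^2(\Z^2)$-estimate, which costs a single factor $\eps^{-3/2}$, yielding $\eps^5\cdot\eps^{-3/2}=\eps^{7/2}$.

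For $Res^{U^{(1)}}$, I would expand $W_\eps(\xi+\eps,\eta)-W_\eps(\xi,\eta)$ component-wise using the decomposition \eqref{eq:Weps}, applying Taylor's formula to each $W^{(m)}$ at order $4-m$. By the order-by-order construction \eqref{eq:Wesol}, the polynomial part exactly cancels $c_1\eps\,\p \xi A-\eps^3\p \tau A$, leaving
\[
Res^{U^{(1)}}_{j,k}=\eps^5\sum_{m=0}^{3}\frac{1}{(4-m)!}\int_0^1 (1-s)^{4-m}\,\p \xi^{5-m}W^{(m)}(\xi+s\eps,\eta)\,ds,
\]
which takes the form $\eps^5 G_1(\xi,\eta;\eps)$ with $G_1$ a finite sum of $s$-integrals of derivatives of $A$ and $\p \tau A$ of orders controlled by Lemma~\ref{welpos}. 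The analogous procedure in the $\eta$-variable (in powers of $\eps^2$, truncated so that every remainder carries at least a factor $\eps^5$), together with the construction \eqref{eq:Uesol}, produces $Res^{U^{(2)}}=\eps^5 G_2$. For $Res^W$, Taylor-expanding every shifted evaluation of $W_\eps$, $U_\eps$, $A^2$ and $U_\eps^2$ produces a polynomial whose combined $\eps^3$- and $\eps^4$-order contributions collapse to $-\eps^3[\,2c_1\p \xi\p \tau A+\tfrac{c_1^2}{12}\p \xi^4 A+c_2^2\p \eta^2 A+2\alpha_1\p \xi(A\p \xi A)\,]$ plus a higher-order correction, and this vanishes since $A$ solves \eqref{KPII}, leaving $Res^W=\eps^5 G_W$. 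Finally, $Res^Z=\eps^6 G_Z$ by inspection, because $U_\eps=\mathcal{O}(\eps)$ and the finite difference of the quadratic term carries an additional factor $\eps^2$.

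Each $G_\star(\xi,\eta;\eps)$ is a finite sum of integrals $\int_0^1 F_\star(\xi+s\eps,\eta+s'\eps^2)\,ds$, where $F_\star$ is polynomial in derivatives of $A$, $\p \tau A$, $\p \xi^{-1}\p \tau A$, $\p \xi^{-1}\p \eta A$ and $\p \eta A$, whose orders are bounded by the regularity supplied by Lemma~\ref{welpos}. Translation invariance of Lebesgue measure gives $\norm{F_\star(\cdot+s\eps,\cdot+s'\eps^2)}_{H^2(\R^2)}=\norm{F_\star}_{H^2(\R^2)}$, so Minkowski's inequality for integrals bounds $\norm{G_\star}_{H^2}$ by a constant $C_A$ uniformly in $\eps\in(0,1]$ and $\tau\in[-\tau_0,\tau_0]$. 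Applying Lemma~\ref{lmell2} with Sobolev index $2$ (ample since $A\in H^9(\R^2)$), together with a Cauchy--Schwarz step on the $s$-integral when evaluating at lattice points, produces $\norm{Res^\star}_{\ell^2(\Z^2)}\leq C_A\,\eps^5\cdot\eps^{-3/2}=C\eps^{7/2}$; summing the four contributions gives \eqref{bound-on-residual}.

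The only genuine obstacle is the algebraic bookkeeping that identifies the $\eps^3$- and $\eps^4$-order polynomial part of $Res^W$ as the KP-II operator acting on $A$: one must gather all contributions from the Taylor expansions of $W_\eps(\xi\pm\eps,\eta)-W_\eps(\xi,\eta)$, $U_\eps(\xi,\eta\pm\eps^2)-U_\eps(\xi,\eta)$, and the shifted quadratic nonlinearities, and verify their collapse to the expression displayed just before Step~2. Once this cancellation is recorded, the remaining $H^s$-estimates on $G_\star$ are routine, and the total derivative budget on $A$, $\p \tau A$, and the various antiderivatives is well within the reach of Lemma~\ref{welpos}.
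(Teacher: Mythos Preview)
Your proposal is correct and follows essentially the same approach as the paper's proof: both expand each residual via Taylor's theorem with integral remainder to isolate the $\mathcal{O}(\eps^5)$ tail (using the KP-II equation for the $\eps^3$ and $\eps^4$ cancellations in $Res^W$), then invoke Lemma~\ref{lmell2} to convert the $H^s$ control into an $\ell^2(\Z^2)$ bound at the cost of $\eps^{-3/2}$. Your use of Minkowski's integral inequality and translation invariance to handle the shifted arguments is a slightly cleaner packaging than the paper's pointwise $\sup_{r\in[0,1]}$ bounds, but the substance is identical; note only the minor slip that $U_\eps^2$ enters $Res^Z$, not $Res^W$, and that the $\eps^3$-order content of $Res^W$ is $\p\xi^{-1}$ of the KP-II operator rather than the operator itself.
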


\begin{proof}
	By construction, all terms in $Res_{j,k}^{U^{(1)}}$  below the formal order of $\mathcal{O}(\eps^5)$ vanish. From the Taylor's theorem for $W_{\eps} \lb\xi+\eps,\eta \rb - W_{\eps} \lb \xi,\eta \rb$, the nonzero terms 
	at $\mathcal{O}(\eps^5)$ are given by the integrals:
	\begin{align*}
	\eps^5 \int_0^1 \p \xi^{5-l}\We l\lb \eps\lb j+r\rb,\eps^2 k\rb\frac{\lb 1-r\rb^{5-l-1}}{\lb 5-l-1\rb!}dr, \quad 0 \leq l \leq 3.
	\end{align*}
In view of corrections for $W^{(l)}$ in \eqref{We}, the error is given by a linear combination of the following two terms:
	\begin{equation*}
	\eps^5 \sup_{r \in [0,1]} \left|\p \xi^5 A\lb \eps \lb j+r\rb, \eps^2 k \rb \right|, \quad 
	\eps^5 \sup_{r \in [0,1]} \left| \p \xi^2\p \tau A\lb \eps \lb j+r\rb, \eps^2 k \rb \right|.
	\end{equation*}
Using Lemma \ref{lmell2}, there is a constant $C_s > 0$ such that the $\ell^2$ norm of the residual term $Res^{U^{(1)}}$ is bounded by 
	$$
	\norm{Res^{U^{(1)}}}_{\ell^2} \leq C_s \eps^{\frac{7}{2}} \lb \| A \|_{H^{s+5}} + \| \p \tau A\|_{H^{s+2}} \rb,
	$$
	for $s > 1$.
	
	Similarly, all terms in $Res_{j,k}^{U^{(2)}}$ below the formal order of $\mathcal{O}(\eps^5)$ vanish. From the Taylor's theorem for $W_{\eps} \lb\xi,\eta + \eps^2 \rb - W_{\eps} \lb \xi,\eta \rb$ and the corrections for $W_{\eps}$ and $U_{\eps}$ given by (\ref{We}) and \eqref{Ue}, the error is given by a linear combination of the following six terms:
	\begin{align*}
	& \eps^5 \sup_{r \in [0,1]} \left|\p \xi \p \eta^2  A \lb \eps j, \eps^2(k+r) \rb \right|, \quad &	\eps^5 \sup_{r \in [0,1]} \left| \p \eta \p \tau A \lb \eps j, \eps^2(k+r) \rb \right|, \\
	&  \eps^6 \sup_{r \in [0,1]} \left| \p \eta^3 A \lb \eps j, \eps^2(k+r) \rb \right|, \quad & \eps^6 \sup_{r \in [0,1]} \left|\p \xi^2  \p \eta^2 A \lb \eps j, \eps^2(k+r) \rb \right|, \\
		&  \eps^6 \sup_{r \in [0,1]} \left| \p \xi^{-1} \p \eta^2 \p \tau A \lb \eps j, \eps^2(k+r) \rb \right|, \quad & \eps^6 \left| \p \xi \p \eta  \p \tau A \lb \eps j, \eps^2 k \rb \right|.
	\end{align*}
Using Lemma \ref{lmell2}, there is a constant $C_s > 0$ such that the $\ell^2$ norm of the residual term $Res^{U^{(2)}}$ is bounded for $\eps \in (0,1]$ by 
	$$
	\norm{Res^{U^{(2)}}}_{\ell^2} \leq C_s \eps^{\frac{7}{2}} \lb \| A \|_{H^{s+4}} + \| \p \tau A\|_{H^{s+2}}  + \| \p \xi^{-1} \p \eta \p \tau A \|_{H^{s+1}} \rb,
	$$
for $s > 1$.
	
If $A \in C^0(\mathbb{R},H^s)$ is a solution to the KPII equation (\ref{KPII}) 
with $s \geq 9$, all terms in $Res_{j,k}^{W}$ below the formal order of $\mathcal{O}(\eps^5)$ vanish. Expanding all terms in $Res_{j,k}^{W}$ 
shows that the error is given by a linear combination of the following eight terms:
\begin{align*}
\eps^5 \left|\p \xi^{-1} \p \tau^2 A \lb \eps j, \eps^2 k \rb \right|, \quad 	\eps^5 \left|\p \xi^2 \p \tau A \lb \eps j, \eps^2 k \rb \right|, \quad 
\eps^6 \left| \p \tau^2 A \lb \eps j, \eps^2 k \rb \right|,
\end{align*}
	\begin{align*}
\eps^5 \sup_{r \in [0,1]} \left|\p \xi^5 A \lb \eps (j+r), \eps^2 k \rb \right|, \quad \eps^5 \sup_{r_1,r_2 \in [0,1]} \left| A\lb \eps (j+r_1),\eps^2 k \rb \p \xi^3 A \lb \eps (j+r_2), \eps^2 k \rb \right|,
	\end{align*}
	\begin{align*}
\eps^5 \sup_{r \in [0,1]} \left| 
\p \xi^{-1} \p \eta^3 A \lb \eps j, \eps^2(k+r) \rb \right|,  \quad 
\eps^5 \sup_{r \in [0,1]} \left|\p \xi \p \eta^2  A \lb \eps j, \eps^2 (k+r) \rb \right|, 
\end{align*}
and
\begin{align*}
\eps^6 \sup_{r \in [0,1]} \left| 
\p \eta^3 A \lb \eps j, \eps^2(k+r) \rb \right|.
\end{align*}
Using Lemma \ref{lmell2} and the relation (\ref{A-der}) for $\p \xi^{-1} \p \tau^2 A$, there is a constant $C_s > 0$ such that the $\ell^2$ norm of the residual term $Res^{W}$ is bounded for $\eps \in (0,1]$ by
\begin{align*}
\norm{Res^{W}}_{\ell^2} &\leq C_s \eps^{\frac 7 2} \left( \| \p \tau^2 A \|_{H^{s}} + \| \p \tau A\|_{H^{s+2}} + \| \p \xi^{-2} \p \eta^2 \p \tau  A\|_{H^{s}} + \|  A \|_{H^{s+5}} + \| \p \xi^{-1} \p \eta A \|_{H^{s+2}} \right. \\
& \qquad \qquad \left. +\| A \|_{H^s}  \| A\|_{H^{s+3}} + \| A \|_{H^s} \| \p \tau A \|_{H^s} \right),
\end{align*}
for $s > 1$.

The final residual term $Res^Z_{j,k}$ is estimated from 
the expansion (\ref{Ue}). The error is given by
\begin{align*}
& \eps^6 \sup_{r_1,r_2,r_3,r_4 \in [0,1]} 
\left( | \p \xi^{-1} \p \eta A(\eps j,\eps^2 (k+r_1)) | 
+ \eps |\p \eta A(\eps (j+r_2),\eps^2 k)| \right) \\
& \qquad \qquad 
\left( | \p \xi^{-1} \p \eta^2 A(\eps j,\eps^2 (k+r_3)) | 
+ \eps |\p \eta^2 A(\eps (j+r_4),\eps^2 k)| \right) 
\end{align*}
and is controlled in the $\ell^2$ norm by using the bound	
$$
	\norm{Res^{Z}}_{\ell^2} \leq C_s	\eps^{\frac 9 2} \left( 
	\norm{\p \xi^{-1} \p \eta  A}^2_{H^{s+1}} + 
		\norm{A}^2_{H^{s+2}} \right),
	$$
where $C_s > 0$, $s > 1$, and $\eps \in (0,1]$.
	
Combining all four bounds together and using the fact that 
a solution $A \in C^0(\mathbb{R},H^s)$ to the KP-II equation \eqref{KPII} with $s \geq 9$	enjoys the estimates of Lemma \ref{welpos}, we obtain 
the bound (\ref{bound-on-residual}).
\end{proof}

\subsection{Step 3. Energy estimates}

In order to control the growth of the approximation error from solutions to system (\ref{U1}), (\ref{U2}), and (\ref{U3}), we will introduce the following energy function,
	\begin{align}\label{alpha-energy}
	\begin{split}
	E(t)=&\frac 1 2 \sum_{j,k\in \Z^2} W_{j,k}^2 + Z_{j,k}^2 +c_1^2 \lb U_{j,k}^{(1)}\rb^2+c_2^2 \lb U_{j,k}^{(2)}\rb^2 +c_1^2 \lb V_{j,k}^{(1)}\rb^2+c_2^2 \lb V_{j,k}^{(2)}\rb^2 \\
	&+\alpha_1 \eps^2 \lsb  2 A \lb U_{j,k}^{(1)}\rb^2+\frac 2 3\lb U_{j,k}^{(1)}\rb^3+\frac 2 3\lb V_{j,k}^{(2)}\rb^3\rsb\\
	&+\alpha_2 \eps^2 \lsb A \lb V_{j,k}^{(1)}\rb^2 +U_{j,k}^{(1)} \lb V_{j,k}^{(1)}\rb^2+ \lb U_{j,k}^{(2)} \rb^2 V_{j,k}^{(2)} + 2 U_\eps U_{j,k}^{(2)} V_{j,k}^{(2)} \rsb.
	\end{split}
	\end{align}
The $\eps$-dependent terms of $E(t)$ are chosen from the condition that the growth rate $E'(t)$ along the solution of system (\ref{U1}), (\ref{U2}), and (\ref{U3}) does not contain terms of the formal orders $\mathcal{O}(\eps)$ and $\mathcal{O}(\eps^2)$ (see Lemma \ref{lemma-growth} below). 

The following lemma establishes coercivity of the energy $E(t)$ with respect to the $\ell^2$ norm of the perturbations as long as the perturbations are not large in the $\ell^2$ norm.

\begin{lemma}
	\label{Eest}
	Let $A \in C^0\lb \lsb -\tau_0, \tau_0\rsb,H^{s}\lb\R^2\rb\rb$ 
	and $\p \xi^{-1} \p \eta A \in C^0\lb \lsb -\tau_0, \tau_0\rsb,H^{s-1}\lb\R^2\rb\rb$ with $s > 3$ and assume that $E(t) \leq E_0$ for some $\eps$-independent constant $E_0 > 0$ for every $t \in [-\tau_0 \eps^{-3}, \tau_0 \eps^{-3}]$. 
	There exists some constants $\eps_0>0$ and $K_0 >0$ that depend on $A$ such that 
\begin{equation}
\label{energy-coercive}
	\elt W^2+\elt Z^2+ \elt{\UV U 1}^2+ \elt{\UV U 2}^2+ \elt{\UV V 1}^2+ \elt{\UV V 2}^2\leq 2 K_0 E(t),
\end{equation}
	for each $\eps\in\lb 0,\eps_0\rb$ and $t\in [-\tau_0 \eps^{-3}, \tau_0 \eps^{-3}]$.
\end{lemma}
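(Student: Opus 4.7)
The plan is to split the energy as $E(t) = Q(t) + R_\eps(t)$, where
\begin{equation*}
Q(t) := \tfrac{1}{2}\sum_{(j,k)\in\Z^2}\bigl(W_{j,k}^2 + Z_{j,k}^2 + c_1^2(U^{(1)}_{j,k})^2 + c_2^2(U^{(2)}_{j,k})^2 + c_1^2(V^{(1)}_{j,k})^2 + c_2^2(V^{(2)}_{j,k})^2\bigr)
\end{equation*}
is the leading quadratic form and $R_\eps(t)$ collects the $\eps^2$-weighted cubic corrections in \eqref{alpha-energy}. Writing $N(t)^2$ for the left-hand side of \eqref{energy-coercive}, the elementary bound $Q(t) \geq \gamma\, N(t)^2$ with $\gamma := \tfrac{1}{2}\min(1,c_1^2,c_2^2)$ is immediate.

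For $R_\eps(t)$ I would estimate each term by H\"older on the lattice together with the elementary bound $\|f\|_{\ell^\infty(\Z^2)} \leq \|f\|_{\ell^2(\Z^2)}$ for the pure cubic pieces, and the Sobolev embedding $H^s(\R^2) \hookrightarrow L^\infty(\R^2)$ for $s > 1$ to control the slowly varying coefficients, using
\begin{equation*}
\|A(\eps(\cdot - c_1 t),\eps^2 \cdot,\eps^3 t)\|_{\ell^\infty(\Z^2)} \leq \sup_{\xi,\eta}|A(\xi,\eta,\tau)| \leq C\|A(\cdot,\cdot,\tau)\|_{H^s(\R^2)}.
\end{equation*}
The cross terms involving $U_\eps$ pick up an additional $\eps$ from the expansion \eqref{Ue}. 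Summing the resulting bounds yields
\begin{equation*}
|R_\eps(t)| \leq \eps^2 C_A\, N(t)^2 + \eps^2 C_1\, N(t)^3,
\end{equation*}
with $\eps$-independent constants $C_A,C_1 > 0$ depending only on $\|A\|_{C^0([-\tau_0,\tau_0],H^s)}$, $\alpha_1$, and $\alpha_2$.

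Combining the two bounds, $E(t) \geq \bigl(\gamma - \eps^2 C_A - \eps^2 C_1 N(t)\bigr)N(t)^2$. The principal obstacle is the cubic self-interaction $\eps^2 C_1 N(t)^3$, which cannot be absorbed without an a priori bound on $N(t)$. Here I would invoke the hypothesis $E(t) \leq E_0$: for $\eps$ small enough that $\eps^2 C_A < \gamma/4$, the polynomial inequality $(3\gamma/4)N^2 - \eps^2 C_1 N^3 \leq E_0$ confines $N(t)$ to two disjoint branches, a small one $N \leq M_1 = O(\sqrt{E_0/\gamma})$ and a large one $N \geq M_2 = O(1/\eps^2)$, once $\eps_0$ is chosen so that the local maximum of the left side exceeds $E_0$. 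Since $N(t)$ is continuous in $t$ and $N(0)$ is small by the initial-data hypothesis of Theorem \ref{theorem-horiz}, a standard continuity argument traps $N(t)$ in the small branch for all $t \in [-\tau_0\eps^{-3},\tau_0\eps^{-3}]$. Then $\gamma - \eps^2 C_A - \eps^2 C_1 M_1 \geq \gamma/2$ for $\eps \in (0,\eps_0)$, and the desired coercivity \eqref{energy-coercive} follows with $K_0 = 1/\gamma$.
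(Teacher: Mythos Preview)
Your approach is essentially the same as the paper's: split off the positive-definite quadratic part, bound the $\eps^2$-weighted cubic corrections using $\|\cdot\|_{\ell^\infty} \leq \|\cdot\|_{\ell^2}$ for the perturbation factors and Sobolev embedding for $A$ and $U_\eps$, and then absorb the corrections for small $\eps$. The paper's write-up is in fact more terse than yours: it directly replaces $\|U^{(1)}\|_{\ell^\infty}$, etc., by $(2K_0 E_0)^{1/2}$ in the lower bound and asserts one can choose $\eps_0, K_0$ so that each resulting coefficient is at least $\min(1,K_0^{-1})$. Read literally this is circular (the bound $(2K_0 E_0)^{1/2}$ on $\|U^{(1)}\|_{\ell^2}$ is the conclusion), and what is really intended is exactly the bootstrap/continuity argument you spell out.

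The one point to flag in your version is that you close the continuity argument by invoking ``$N(0)$ is small by the initial-data hypothesis of Theorem~\ref{theorem-horiz}.'' Strictly speaking that hypothesis is external to the lemma as stated; the lemma assumes only $E(t)\leq E_0$. In practice this is harmless, since the lemma is only ever applied inside the proof of Theorem~\ref{theorem-horiz} where the initial smallness is available, and the paper's own proof tacitly relies on the same mechanism. If you want the lemma to stand on its own, you could either add the smallness of $N(0)$ (or equivalently continuity of $t\mapsto N(t)$ from a small initial value) to the hypotheses, or phrase the conclusion as a bootstrap implication: whenever \eqref{energy-coercive} holds with constant $2K_0$, it in fact holds with constant $K_0$.
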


\begin{proof}
It follows from the decomposition (\ref{Ue}) and Sobolev's embedding 
of $H^s(\R^2)$ into $L^{\infty}(\R^2)$ for $s > 1$ that if $A \in C^0\lb \lsb -\tau_0, \tau_0\rsb,H^{s}\lb\R^2\rb\rb$ 
and $\p \xi^{-1} \p \eta A \in C^0\lb \lsb -\tau_0, \tau_0\rsb,H^{s-1}\lb\R^2\rb\rb$ with $s > 3$, then there exists 
a constant $C_0 > 0$ that depends on $A$ such that 
	\begin{align*}
\sup_{t \in [-\tau_0,\tau_0]} \left( \|A(\cdot,\tau) \|_{L^{\infty}(\R^2)} + 
\|U_{\eps}(\cdot,\tau) \|_{L^{\infty}(\R^2)} \right) \leq C_0.
	\end{align*}
By using (\ref{alpha-energy}) and the Cauchy--Schwarz inequality for $U^{(2)}_{j,k} V^{(2)}_{j,k}$, we derive the following lower bound for $E(t)$:
	\begin{align*}
	2E(t)\geq & \elt {W}^2 + \elt Z^2 \\
	& + \lb c_1^2 - 2 \alpha_1 \eps^2 C_0 -\frac 2 3 \alpha_1 \eps^2 \| U^{(1)} \|_{\ell^{\infty}} \rb \elt {\UV U 1}^2\\
	&+\lb c_2^2 -\alpha_2 \eps^2 C_0- \alpha_2 \eps^2 \| V^{(2)} \|_{\ell^{\infty}} \rb \elt {\UV U 2}^2\\
	&+\lb c_1^2 -\alpha_2 \eps^2 C_0 -\alpha_2 \eps^2 \| U^{(1)} \|_{\ell^{\infty}}  \rb \elt {\UV V 1}^2\\
	&+\lb c_2^2 -\alpha_2 \eps^2 C_0 - \frac 2 3 \alpha_1 \eps^2 \| V^{(2)} \|_{\ell^{\infty}}  \rb \elt {\UV V 2}^2
	\end{align*}
	For fixed $c_1,c_2>0$, we use the bound $\| U \|_{\ell^{\infty}} \leq \| U \|_{\ell^2}$ 
	and choose $\eps_0 > 0$ and $K_0 > 0$ such that 
	\begin{align*}
	& c_1^2 - 2 \alpha_1 \eps_0^2 C_0 -\frac 2 3 \alpha_1 \eps_0^2 (2K_0 E_0)^{1/2} \geq {\rm min}(1,K_0^{-1}), \\
	& c_2^2 -\alpha_2 \eps_0^2 C_0- \alpha_2 \eps_0^2 (2K_0 E_0)^{1/2} \geq {\rm min}(1,K_0^{-1}), \\
	& c_1^2 -\alpha_2 \eps_0^2 C_0 -\alpha_2 \eps_0^2 (2K_0 E_0)^{1/2} \geq {\rm min}(1,K_0^{-1}), \\
	& c_2^2 -\alpha_2 \eps_0^2 C_0 - \frac 2 3 \alpha_1 \eps_0^2 (2K_0 E_0)^{1/2} \geq {\rm min}(1,K_0^{-1}), 
	\end{align*}
	which is always possible if $\eps_0$ is sufficiently small. This gives the bound (\ref{energy-coercive}) with $K_0$ redefined as $\max(1,K_0)$.
\end{proof}

The following lemma uses the coercivity of the energy $E(t)$ in Lemma \ref{Eest} to establish the rate at which it may grow in time along the solutions of system (\ref{U1}), (\ref{U2}), and (\ref{U3}). We will be able to use this in Step 4, along with a Gronwall lemma argument of Lemma \ref{gronwall} below, in order to get a bound on the size of the energy quantity. This will in turn gives a bound on how far solutions of the KP-II equation \eqref{KPII} drift away from solutions of the FPU system \eqref{eom} and hence will complete the proof of Theorem \ref{theorem-horiz}.

\begin{lemma}
\label{lemma-growth}
	Let $A \in C^0\lb \lsb -\tau_0, \tau_0\rsb,H^{s+9}\lb\R^2\rb\rb$ be a solution to the KP-II equation \eqref{KPII} with $s \geq 0$ in the class of functions of Lemma \ref{KPIIwellpos} and assume that $E(t) \leq E_0$ for some $\eps$-independent constant $E_0 > 0$ for every $t \in [-\tau_0 \eps^{-3},\tau_0 \eps^{-3}]$. There exist some constants $\eps_0 > 0$ and $K_0 > 0$ that depend on $A$ such that 
	\begin{equation}
	\label{energy-balance}
	\left|E'(t)\right|\leq K_0 \lb \eps^{\frac 7 2} E(t)^{\frac 1 2}  + \eps^3 E(t) \rb,
	\end{equation}	
	for each $\eps \in (0,\eps_0)$ and $t \in [-\tau_0 \eps^{-3},\tau_0 \eps^{-3}]$.
\end{lemma}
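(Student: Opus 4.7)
The strategy is to differentiate $E(t)$ along the flow of \eqref{U1}--\eqref{U3} and extract a chain of exact cancellations so that only two kinds of remainders survive: residual terms (bounded via Lemma \ref{lres} and Lemma \ref{Eest}, giving the $\eps^{7/2} E^{1/2}$ contribution) and chain-rule terms coming from $\dot A$ and $\dot U_\eps$ (giving the $\eps^3 E$ contribution). The cancellations have two layers. First, the derivative of the quadratic block $\tfrac{1}{2}\sum [W^2 + Z^2 + c_1^2 (U^{(1)})^2 + c_2^2 (U^{(2)})^2 + c_1^2 (V^{(1)})^2 + c_2^2 (V^{(2)})^2]$, when combined with the linear finite-difference pieces of $\dot W, \dot Z$ in \eqref{U3}, vanishes identically by the discrete summation-by-parts identity $\sum_j W_j(U^{(1)}_j - U^{(1)}_{j-1}) = -\sum_j U^{(1)}_j(W_{j+1}-W_j)$ (and analogously in the $k$-direction and the $V$-sector). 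This is just conservation of energy for the linearized FPU system.

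Second, the $\eps^2$-weighted correction terms in $E(t)$ are precisely tuned to absorb the $\eps^2$ nonlinear pieces of $\dot W$ and $\dot Z$. Illustrating with the $\alpha_1$ block: the $W$-multiplied contribution $\alpha_1 \eps^2 \sum W_{j,k}[2A(\xi_j) U^{(1)}_{j,k} - 2A(\xi_{j-1}) U^{(1)}_{j-1,k} + (U^{(1)}_{j,k})^2 - (U^{(1)}_{j-1,k})^2]$ reduces by SBP to $-\alpha_1 \eps^2 \sum [2 A U^{(1)} + (U^{(1)})^2](W_{j+1,k}-W_{j,k})$, which cancels exactly with the $\dot U^{(1)}$-contribution $\alpha_1 \eps^2 \sum [2 A U^{(1)} + (U^{(1)})^2] \dot U^{(1)}$ that appears when differentiating $\alpha_1 \eps^2 [A(U^{(1)})^2 + \tfrac{1}{3}(U^{(1)})^3]$ in $E$. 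What remains from this pairing is only $\alpha_1 \eps^2 \dot A (U^{(1)})^2 + \alpha_1 \eps^2 [2AU^{(1)} + (U^{(1)})^2] Res^{U^{(1)}}$. Structurally identical cancellations absorb the $\alpha_1 (V^{(2)})^2$ block in $\dot Z$, the $\alpha_2$ blocks involving $V^{(1)}, U^{(1)} V^{(1)}$, and the triple-product block $\alpha_2 \eps^2 U_\eps U^{(2)} V^{(2)}$, which is paired simultaneously with a $W_{j,k+1}-W_{j,k}$ piece from $\dot W$ and a $Z_{j,k+1}-Z_{j,k}$ piece from $\dot Z$.

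After all cancellations, the surviving terms are (a) residual contributions $\sum (W Res^W + Z Res^Z + c_1^2 U^{(1)} Res^{U^{(1)}} + c_2^2 U^{(2)} Res^{U^{(2)}})$ together with $\eps^2$-weighted variants such as $\alpha_1 \eps^2 \sum [2AU^{(1)} + (U^{(1)})^2] Res^{U^{(1)}}$, and (b) chain-rule remainders $\alpha_1 \eps^2 \dot A (U^{(1)})^2$, $\alpha_2 \eps^2 \dot A (V^{(1)})^2$, and $\alpha_2 \eps^2 \dot U_\eps U^{(2)} V^{(2)}$. For (a), Cauchy--Schwarz with the $\ell^2$-bound from Lemma \ref{Eest} and the residual bound from Lemma \ref{lres} yields the $C \eps^{7/2} E^{1/2}$ estimate (the $\eps^2$-weighted residuals giving the subdominant $C\eps^{11/2}(E + E^{1/2})$, which can also be absorbed via $\| (U^{(1)})^2 \|_{\ell^2} = \|U^{(1)}\|_{\ell^4}^2 \leq \|U^{(1)}\|_{\ell^2}^2$). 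For (b), since $A$ and $U_\eps$ depend on time only through $\xi=\eps(j-c_1 t)$ and $\tau=\eps^3 t$, the chain rule gives $\| \dot A \|_{L^\infty} \leq C \eps$ and $\| \dot U_\eps \|_{L^\infty} \leq C \eps^2$ (using Sobolev embedding on the regularity provided by Lemma \ref{welpos}); combined with the $\eps^2$ prefactor and Lemma \ref{Eest}, these terms are bounded by $C \eps^3 E$. Summing all contributions produces \eqref{energy-balance}. The main obstacle is the careful bookkeeping for the $\alpha_2$ triple product $U_\eps U^{(2)} V^{(2)}$ and the shifted arguments $A(\xi_j)$ versus $A(\xi_{j-1})$ (and $U_\eps(\eta)$ versus $U_\eps(\eta-\eps^2)$) under SBP: one must verify that no mismatched $\eps^2$-term of size $E^{3/2}$ survives and that every surviving term regroups either into a residual or into a chain-rule remainder of the required form.
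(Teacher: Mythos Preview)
Your proposal is correct and follows essentially the same route as the paper: differentiate $E(t)$, use discrete summation-by-parts so that the quadratic block and the $\eps^2$-weighted correction terms cancel the linear and nonlinear finite-difference pieces of $\dot W,\dot Z$, leaving only (a) residual terms paired with perturbation variables and (b) chain-rule remainders $\dot A,\dot U_\eps$, which are then bounded by Lemmas~\ref{lres} and~\ref{Eest} to yield the $\eps^{7/2}E^{1/2}$ and $\eps^{3}E$ contributions respectively. Your description of the cancellation mechanism is in fact more explicit than the paper's (which compresses this step into ``substituting \eqref{U1}--\eqref{U3} and summing over $(j,k)$ simplifies $E'(t)$ to \dots''), and your sharper observation $\|\dot U_\eps\|_{L^\infty}=\mathcal O(\eps^2)$ is correct though not strictly needed, since the paper's coarser bound $\eps^2(c_1\eps\|\partial_\xi U_\eps\|_{L^\infty}+\eps^3\|\partial_\tau U_\eps\|_{L^\infty})\le C\eps^3$ already suffices.
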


\begin{proof}	By differentiating $E(t)$, defined by \eqref{alpha-energy}, in time $t$, we obtain
	\begin{align*}
E'(t) = &\sum_{j,k\in \Z^2} \dot W_{j,k}W_{j,k} + \dot Z_{j,k}Z_{j,k}+c_1^2 \dot U_{j,k}^{(1)} U_{j,k}^{(1)}+c_2^2 \dot U_{j,k}^{(2)} U_{j,k}^{(2)} + c_1^2 \dot V_{j,k}^{(1)} V_{j,k}^{(1)}+c_2^2 \dot V_{j,k}^{(2)}V_{j,k}^{(2)}\\%1
	& +\alpha_1 \eps^2 \lsb \dot {A} \lb U_{j,k}^{(1)}\rb^2+ 2 A \dot U_{j,k}^{(1)}U_{j,k}^{(1)}+\dot U_{j,k}^{(1)}\lb U_{j,k}^{(1)}\rb^2+\dot  V_{j,k}^{(2)}\lb V_{j,k}^{(2)}\rb^2\rsb\\%2
	&+\alpha_2 \eps^2 \lsb  \dot{U_\eps}U_{j,k}^{(2)}V_{j,k}^{(2)}+\frac 1 2 \dot U_{j,k}^{(1)} \lb V_{j,k}^{(1)}\rb^2+U_{j,k}^{(1)} \dot V_{j,k}^{(1)}V_{j,k}^{(1)}+\frac 1 2 \lb U_{j,k}^{(2)} \rb^2 \dot V_{j,k}^{(2)}\right.\\%4
	&\left.+\dot U_{j,k}^{(2)}U_{j,k}^{(2)} V_{j,k}^{(2)} +U_\eps\dot U_{j,k}^{(2)}V_{j,k}^{(2)}+U_\eps U_{j,k}^{(2)}\dot V_{j,k}^{(2)}+\frac 1 2 \dot {A}\lb V_{j,k}^{(1)}\rb^2+ A \dot V_{j,k}^{(1)}V_{j,k}^{(1)}\rsb,
	\end{align*}
	where the dot denotes derivative in $t$ and is applied with the chain rule to $A$ and $U_{\eps}$ that depends on $(\eps(j-c_1t),\eps^2 k,\eps^3 t)$.
Substituting equations of motion \eqref{U1}, \eqref{U2}, and \eqref{U3} and summing across $(j,k)\in \Z^2$ simplifies $E'(t)$ to the form:
	\begin{align*}
E'(t) = &\sum_{j,k\in \Z^2} W_{j,k} Res^{W}_{j,k} +Z_{j,k} Res^{Z}_{j,k} \\
& + \lb c_1^2 \Ux j k + 2\alpha_1 \eps^2 A \Ux j k + \alpha_1\eps^2 \lb U_{j,k}^{(1)} \rb^2+ \frac{1}{2} \alpha_2\eps^2  \lb V_{j,k}^{(1)}\rb^2 \rb
Res^{U^{(1)}} \\
& + \lb c_2^2 \Uy j k +\alpha_2 \eps^2 U_\eps \Vy j k + \alpha_2\eps^2  U_{j,k}^{(2)} V_{j,k}^{(2)} \rb Res^{U^{(2)}} \\
& +\alpha_1 \eps^2 \lb -c_1\eps \p \xi A + \eps^3 \p \tau A \rb\lb \Ux j k\rb^2\\
& +\alpha_2 \eps^2 \lb  -c_1 \eps\p\xi  {U_\eps} + \eps^3 \p \tau  {U_\eps}\rb \Uy j k \Vy j k\\
&+\frac{\alpha_2}2 \eps^2 \lb - c_1\eps \p \xi A + \eps^3 \p \tau A  \rb\lb \Vx j k\rb^2.
	\end{align*}
Applying the Cauchy-Schwartz inequality  and the bound $\| U \|_{\ell^{\infty}} \leq \| U \|_{\ell^2}$ yields
\begin{align*}
\left| E'(t) \right|\leq& \elt{W} \elt{Res^W}+\elt Z  \elt{Res^Z}\\%1
&+ \lb c_1^2 \elt {\UV U 1} 
+ 2 \alpha_1 \eps^2 \Li {A} \elt{\UV U 1} 
+ \alpha_1 \eps^2 \elt{U^{(1)}}^2
+ \frac{1}{2} \alpha_2 \eps^2 \elt{V^{(1)}}^2
\rb \elt{Res^{\UV U 1}} \\ 
& + \lb c_2^2 \elt{\UV U 2} + \alpha_2 \eps^2 \Li{U_\eps} \elt{\UV V 2} 
+ \alpha_2 \eps^2 \| U^{(2)} \|_{\ell^2} \| V^{(2)} \|_{\ell^{2}}
\rb  \elt{Res^{\UV U 2}}\\
&+ \eps^2 \lb c_1 \eps \Li{\p \xi A} + \eps^3 \Li{\p \tau A} \rb 
\lb \alpha_1 \elt{\UV U 1}^2 + \frac{1}{2} \alpha_2 \elt{\UV V 1}^2 \rb \\
&+ \eps^2 \lb c_1 \eps \Li{\p \xi U_{\eps}} + \eps^3 \Li{\p \tau U_{\eps}} \rb 
\elt {\UV U 2} \elt{\UV V 2}.
	\end{align*}
Estimating the residual terms and the perturbation terms with the help of Lemmas \ref{lres} and \ref{Eest}, respectively, yields (\ref{energy-balance}).
\end{proof}

\subsection{Step 4. Bound on the approximation error}

By making the substitution $E(t) := \frac{1}{2} Q(t)^2$, we obtain 
from (\ref{energy-balance}):
\begin{equation}
\label{norm-balance}
\left| Q'(t) \right| \leq K_0 \lb \eps^{\frac 7 2}+ \eps^3Q\rb,
\end{equation}
where the constant $K_0$ may change from one line to another line. 
The norm of the perturbation terms controlled by $Q(t)$ is further estimated by using the Gronwall's inequality.

\begin{lemma}\label{gronwall}
Assume that $Q(t)$ satisfies (\ref{norm-balance}) for $t \in [-\tau_0 \eps^{-3},\tau_0 \eps^{-3}]$ and $Q(0)\leq C_0 \eps^{\frac 1 2}$ for some $\eps$-independent constant $C_0$. There exists $\eps_0 > 0$ such that 
	\begin{equation}
	\label{Q-control}
	Q(t)\leq \eps^{\frac{1}{2}} (1 + C_0) \exp\lb K_0 \tau_0\rb
	\end{equation}
	for each $\eps \in \lb 0,\eps_0\rb$ and $t \in [-\tau_0 \eps^{-3},\tau_0 \eps^{-3}]$.
\end{lemma}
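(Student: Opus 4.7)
The plan is to reduce the differential inequality \eqref{norm-balance} to a homogeneous linear one by a constant shift and then apply the standard Gronwall inequality. Since $Q(t) = \sqrt{2E(t)} \geq 0$, the bound $|Q'(t)| \leq K_0(\eps^{7/2} + \eps^3 Q(t))$ implies in particular the one-sided estimate $Q'(t) \leq K_0(\eps^{7/2} + \eps^3 Q(t))$ for $t \geq 0$, and symmetrically $-Q'(t) \leq K_0(\eps^{7/2} + \eps^3 Q(t))$ for $t \leq 0$; both cases will be treated in the same way by reversing the time direction.

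First, I would introduce the auxiliary function $R(t) := Q(t) + \eps^{1/2}$, chosen precisely so that the inhomogeneous term is absorbed: indeed, $K_0 \eps^{7/2} + K_0 \eps^3 Q(t) = K_0 \eps^3 (Q(t) + \eps^{1/2}) = K_0 \eps^3 R(t)$, so for $t \geq 0$ we obtain the homogeneous estimate
\begin{equation*}
R'(t) \leq K_0 \eps^3 R(t).
\end{equation*}
Since $R(t) > 0$, Gronwall's inequality (or direct integration of $(\log R)' \leq K_0 \eps^3$) gives $R(t) \leq R(0) \exp(K_0 \eps^3 t)$.

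Next, I would insert the time range. For $t \in [0, \tau_0 \eps^{-3}]$ we have $K_0 \eps^3 t \leq K_0 \tau_0$, so that $R(t) \leq R(0) \exp(K_0 \tau_0)$. Combining this with the initial bound $Q(0) \leq C_0 \eps^{1/2}$, which yields $R(0) \leq (1 + C_0) \eps^{1/2}$, and using $Q(t) \leq R(t)$, one obtains the desired estimate \eqref{Q-control} on $[0, \tau_0 \eps^{-3}]$. The argument for $t \in [-\tau_0 \eps^{-3}, 0]$ is identical after reversing time, since the hypothesis is formulated with $|Q'(t)|$ and the resulting estimate $-R'(t) \leq K_0 \eps^3 R(t)$ integrates to the same exponential bound over a time interval of length at most $\tau_0 \eps^{-3}$.

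There is no real obstacle here: the statement is a textbook Gronwall argument tailored to the particular balance between the residual size $\eps^{7/2}$, the linear coefficient $\eps^3$, and the time window $\tau_0 \eps^{-3}$ on which the KP--II solution is controlled by Lemma \ref{welpos}. The only point requiring a moment of care is choosing the shift $\eps^{1/2}$ exactly to homogenize the inequality, which is what makes the exponent in \eqref{Q-control} independent of $\eps$ and leaves the factor $(1 + C_0) \eps^{1/2}$ from the initial data. This bound on $Q$, combined with the coercivity bound \eqref{energy-coercive} of Lemma \ref{Eest}, will then yield the $\ell^2$ estimate \eqref{res} of Theorem \ref{theorem-horiz} after a standard continuity/bootstrap argument ensuring that the a priori assumption $E(t) \leq E_0$ used in Lemmas \ref{Eest} and \ref{lemma-growth} remains valid throughout $[-\tau_0 \eps^{-3}, \tau_0 \eps^{-3}]$ once $\eps_0$ is chosen small enough.
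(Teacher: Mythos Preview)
Your proof is correct and is essentially the same as the paper's: the paper uses the integrating factor $\exp(-\eps^3 K_0 t)$ to integrate the inhomogeneous inequality, while you perform the constant shift $R = Q + \eps^{1/2}$ to homogenize it, and these two manipulations are equivalent and produce the identical bound $Q(t) \leq (Q(0)+\eps^{1/2})\exp(\eps^3 K_0 t)$. Your explicit treatment of $t<0$ by time reversal and your remark on the bootstrap for the a priori bound $E(t)\leq E_0$ are minor additions that the paper leaves implicit.
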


\begin{proof}
By using the integrating factor, we can rewrite (\ref{norm-balance}) in the form:
	\begin{equation}\label{Qdiff1}
	\frac{d}{dt}\lsb \exp\lb - \eps^3 K_0 t\rb Q\rsb\leq K_0 \eps^{\frac 7 2}\exp\lb - \eps^3 K_0 t\rb.
	\end{equation}
	Integrating \eqref{Qdiff1} yields the Gronwall's inequality
	\begin{equation}\label{Qdiff2}
	Q(t)\leq \lb Q(0)+ \eps^{\frac 1 2}\rb \exp\lb  \eps^3 K_0 t\rb.
	\end{equation}
	Since $Q(0)\leq C\eps^{\frac 1 2}$ and $t \in [-\tau_0 \eps^{-3},\tau_0 \eps^{-3}]$, the inequality (\ref{Qdiff2}) yields (\ref{Q-control}).
\end{proof}

We can finish the proof of Theorem \ref{theorem-horiz} by using Lemmas \ref{Eest} and \ref{gronwall}. Since 
\begin{align*}
Q(0)\leq C_{A} \lb\norm{U^{(1)}_{in}}_{\ell^2}+\norm{U^{(2)}_{in}}_{\ell^2}+\norm{W_{in}}_{\ell^2} +\norm{V^{(1)}_{in}}_{\ell^2} +\norm{V^{(2)}_{in}}_{\ell^2}+\norm{Z_{in}}_{\ell^2}\rb,
	\end{align*}
	where the subscript $in$ stands for the initial condition, 
the decomposition \eqref{eq:decom} and the hypothesis \eqref{hyp} yield that 
$Q(0)\leq C \eps^{\frac 1 2}$, so that Lemma \ref{gronwall} gives the bound (\ref{Q-control}). With decomposition \eqref{eq:decom} and Lemma \ref{Eest} we have that 
	\begin{align*}
	&\norm{u^{(1)}(t)-\eps^2 A\lb \eps(j-c_1t),\eta^2k,\eps^3 t\rb}^2_{\ell^2}+\norm{u^{(2)}(t)-\eps^2 U_\eps\lb \eps(j-c_1t),\eta^2k,\eps^3 t\rb}^2_{\ell^2}\\
	& + \norm{w(t)-\eps^2 W_\eps\lb \eps(j-c_1t),\eta^2k,\eps^3 t\rb}^2_{\ell^2} +\norm{v^{(1)}(t)}^2_{\ell^2}+\norm{v^{(2)}(t)}^2_{\ell^2}+\norm{z(t)}^2_{\ell^2}\leq K_0 \eps^4 Q(t)^2,
	\end{align*}
	where the $\eps$-dependent functions $W_{\eps}$ and $U_{\eps}$ are given in terms of $A$ by (\ref{We}) and (\ref{Ue}), respectively.
Due to the bound (\ref{Q-control}) and the triangle inequality, we obtain 
the bound (\ref{res}) and the result of Theorem \ref{theorem-horiz} follows.

\section{Proof of Theorem \ref{theorem-diag}}
\label{sec-proof-2}

Here we give relevant details of the proof of Theorem \ref{theorem-diag}. As is explained in 
the introduction, we will only consider the reduction of the general FPU system 
if $c_1 = c_2$ and $\alpha_2 = 2 \alpha_1$, when the vertical and horizontal displacements 
on the square two-dimensional lattice coincide with $x_{j,k} = y_{j,k}$. 

Instead of working with the strain variables in (\ref{strain-variabl-diag}), 
we introduce the following strain variables (see Fig. \ref{fig:2D-diag}):
\begin{align}
\left\{ \begin{array}{l} 
a^l_{m,n} =\chi_{m,n}-x_{m,n},\\
a^d_{m,n} =x_{m+1,n+1}-\chi_{m,n},\\
a^x_{m,n} =x_{m+1,n}-\chi_{m,n},\\
a^y_{m,n} =x_{m,n+1}-\chi_{m,n},
\end{array} \right.
\end{align}
in order to write equations of motion in the form:
\begin{align}
\label{diagaccx}
\begin{split}
\dot{a}^l_{m,n} =& v_{m,n} - u_{m,n},\\
\dot{a}^d_{m,n} =& u_{m+1,n+1} - v_{m,n},\\
\dot{a}^x_{m,n} =& u_{m+1,n} - v_{m,n},\\
\dot{a}^y_{m,n} =& u_{m,n+1} - v_{m,n},\\
\dot u_{m,n} =& c_1^2 \lb \av l m {n}- \av d {m-1} {n-1} - \av x {m-1} n - \av y {m} {n-1}\rb\\&
+2\alpha_1 \lsb \lb \av l m {n}\rb^2-\lb \av d {m-1} {n-1}\rb^2 -\lb\av x{m-1}{n}\rb^2 + \lb\av y m {n-1} \rb^2 \rsb, \\
\dot v_{m,n} =& c_1^2 \lb \av d m {n}- \av l {m} n + \av x m n + \av y {m} {n}\rb\\
&+2\alpha_1 \lsb \lb \av d m {n}\rb^2-\lb \av l {m} n\rb^2+\lb\av x m n \rb^2-\lb\av y {m} {n}\rb^2\rsb,
\end{split}
\end{align}
where $u_{m,n} := \dot{x}_{m,n}$, $v_{m,n} := \dot{\chi}_{m,n}$, and $(m,n) \in \mathbb{Z}^2$. The justification procedure is divided into the same four steps as in the case of the horizontal propagation. 

\subsection{Step 1. Decomposition} 
We will use the following decomposition,
\begin{align}\label{diagdecom}
\begin{split}
a^l_{m,n}&=\eps^2 L_\eps\lb \xi,\eta,\tau\rb+\eps^2 L_{m,n},\\
a^d_{m,n}&=\eps^2 D_\eps\lb \xi,\eta,\tau\rb+\eps^2 D_{m,n},\\
a^x_{m,n}&=\eps^2 X_\eps\lb \xi,\eta,\tau\rb+\eps^2 X_{m,n},\\
a^y_{m,n}&=\eps^2 Y_\eps\lb \xi,\eta,\tau\rb+\eps^2 Y_{m,n},\\
u_{m,n}&=\eps^2 U_\eps\lb \xi,\eta,\tau\rb+\eps^2 U_{m,n},\\
v_{m,n}&=\eps^2 V_\eps\lb \xi,\eta,\tau\rb+\eps^2 V_{m,n},
\end{split}
\end{align}
where $\xi = \eps (m - c_1^* t)$, $\eta = \eps^2 n$, and $\tau= \eps^3 t$ with $c_1^* := \frac{c_1}{\sqrt{2}}$. By ignoring the error terms and the residual terms of the formal order of $\mathcal{O}(\eps^5)$ for the time being, we shall use the equations of motion  \eqref{diagaccx} and define the expansions of the functions $L_{\eps}, \dots, V_{\eps}$ in $\eps$ from the condition that all residual terms of the formal order below $\mathcal{O}(\eps^5)$ are removed.

The first equation in system \eqref{diagaccx} gives us the relationship: 
\begin{equation}
\label{Vep}
-\eps c_1^* \partial_{\xi} L_{\eps} + \eps^3 \partial_{\tau} L_{\eps} = V_{\eps} - U_{\eps},
\end{equation}
which is used to eliminate $V_{\eps}$ from all other relations. 

Adding the first and third equations in system (\ref{diagaccx}) implies
\begin{equation}
\label{Ueq}
-\eps c_1^* \partial_{\xi} (L_{\eps} + X_{\eps}) + \eps^3 \partial_{\tau} (L_{\eps} + X_{\eps}) = U_\eps \lb\xi+\eps,\eta\rb-U_\eps\lb\xi,\eta\rb,
\end{equation}
which coincides with equation (\ref{WxR}) up to notations. As follows from (\ref{KPII-scaling-diag}), we set 
\begin{equation}
\label{Xeq}
X_{\eps} + L_{\eps} = A,
\end{equation} 
where $A$ is a suitable solution of the KP-II equation (\ref{KPIIdiag}). 
Since (\ref{Ueq}) coincides with (\ref{WxR}), we rewrite 
expansions (\ref{eq:Weps}) and (\ref{We}) in new notations:
\begin{align}
\label{Uep}
U_\eps = -c_1^* A + \eps \lb \frac{c^*_1}{2}\p \xi A \rb + \eps^2 \lb\p \xi^{-1} \p \tau A-\frac{c^*_1}{12} \p\xi^2 A\rb-\eps^3 \lb \frac 1 2 \p \tau A \rb.
\end{align}

Adding the first and fourth equations in system \eqref{diagaccx} implies
\begin{equation}\label{Deq}
-\eps c_1^* \partial_{\xi} (L_{\eps} + Y_{\eps}) + \eps^3 \partial_{\tau} (L_{\eps} + Y_{\eps}) = \efun U\lb \xi, \eta+\eps^2\rb -\efun U\lb \xi,\eta\rb
\end{equation}
which coincides with equation (\ref{Ut}) up to notations, Again, we rewrite 
expansions (\ref{eq:Ueps}) and (\ref{Ue}) in new notations:
\begin{align}
\label{Yep}
Y_{\eps} + L_{\eps} = \eps \p \xi^{-1}\p \eta A-\eps^2 \lb \frac 1 2 \p \eta A \rb 
+ \eps^3 \lb \frac 1 2 \p \xi^{-1}\p\eta^2 A+\frac{1}{12}\p \eta \p \xi A\rb.
\end{align}

Finally, adding the first and second equations in system \eqref{diagaccx} implies
\begin{equation}
\label{Beq}
-\eps c_1^* \partial_{\xi} (L_{\eps} + D_{\eps}) + \eps^3 \partial_{\tau} (L_{\eps} + D_{\eps}) = \efun U\lb \xi+\eps, \eta+\eps^2\rb -\efun U\lb \xi,\eta\rb.
\end{equation}
We derive by using Taylor series and expansion \eqref{Uep} up 
to the formal order of $\mathcal{O}(\eps^5)$:
\begin{align*}
\efun U\lb \xi+\eps, \eta+\eps^2\rb -\efun U\lb \xi,\eta\rb &= \eps \p \xi \efun U + \eps^2 \lb\ \frac 1 2 \p \xi^2 \efun U +\p\eta U_\eps\rb \\
& \quad +\eps^3 \lb\frac 1 6 \p \xi^3 \efun U + \p \xi \p \eta \efun U\rb + \eps^4 \lb \frac 1 {24} \p \xi^4 \efun U + \frac 1 2 \p\xi^2 \p \eta \efun U + \frac 1 2 \p\eta^2 \efun U\rb \\
&= -c_1^*\eps  \p\xi A-c_1^*\eps^2 \p \eta A+\eps^3 \lb\p \tau A - \frac{c_1^*}{2}\p \xi \p \eta A\rb\\
& \quad +\eps^4 \lb\pxi 1 \p \eta \p\tau A - \frac{c_1^*}{12}\p\xi^2 \p \eta A - \frac {c_1^*}{2}\p\eta^2 A\rb.
\end{align*}
Expanding the left hand side of equation \eqref{Beq} in orders of $\eps$ and comparing with the previous expansions yields 
\begin{align}
\label{Dep}
D_{\eps} + L_{\eps} = A + \eps \p \xi^{-1}\p \eta A + 
\eps^2 \lb \frac 1 2 \p \eta A \rb 
+ \eps^3 \lb \frac 1 2 \p \xi^{-1}\p\eta^2 A+\frac{1}{12} \p \xi \p \eta A\rb.
\end{align}

All quantities of the decomposition (\ref{diagdecom}) are now defined in terms of $A$ and $L_{\eps}$. We can now use the fifth and sixth equations in system (\ref{diagaccx}) in order to define $L_{\eps}$ and to verify the validity of the KP-II equation (\ref{KPIIdiag}) for $A$ up to truncation at the formal order of $\mathcal{O}(\eps^5)$. The fifth and sixth equations in system \eqref{diagaccx} yield
\begin{equation}\label{Eeq}
\begin{split}
& - \eps c_1^* \p \xi \efun U +\eps^3 \p \tau \efun U = c_1^2 \lb \efun L \lb \xi, \eta\rb -\efun D \lb \xi-\eps, \eta-\eps^2\rb -\efun X \lb \xi-\eps, \eta\rb-\efun Y \lb \xi, \eta-\eps^2\rb\rb \\
&+2\alpha_1 \eps^2\lsb \efun L \lb \xi, \eta\rb^2 - \efun D \lb \xi-\eps, \eta-\eps^2\rb^2 -\efun X \lb \xi-\eps, \eta\rb^2 + \efun Y \lb \xi, \eta-\eps^2\rb^2\rsb
\end{split}
\end{equation}
and
\begin{equation}\label{Feq}
\begin{split}
& - \eps c_1^*  \p \xi \efun V+\eps^3 \p \tau \efun V = c_1^2 \lb\efun D \lb \xi, \eta\rb - \efun L \lb \xi, \eta\rb + \efun X \lb \xi, \eta\rb+\efun Y \lb \xi, \eta\rb\rb \\
&+2\alpha_1 \eps^2\lsb   \efun D \lb \xi, \eta\rb^2- \efun L \lb \xi, \eta\rb^2 + \efun X \lb \xi, \eta\rb^2- \efun Y \lb \xi, \eta\rb^2 \rsb
\end{split}
\end{equation}

In view of equation (\ref{Vep}), the left-hand side of equation (\ref{Feq}) is expanded as
\begin{align*}
- \eps c_1^*  \p \xi \efun V+\eps^3 \p \tau \efun V =-\eps c_1^* \p \xi \efun U + \eps^2 (c_1^*)^2\p\xi^2 \efun L +\eps^3 \p \tau \efun U -2\eps^4 c_1^* \p\xi\p\tau \efun L + \eps^6\p\tau^2 \efun L,
\end{align*}
whereas the right-hand side of equation (\ref{Feq}) can be written as 
\begin{align*}
& c_1^2 \lb\efun D + \efun L + \efun X + \efun L + \efun Y + \efun L - 4 \efun L \rb \\
&+2\alpha_1 \eps^2\lsb  (\efun D + \efun L)^2 + (\efun X + \efun L)^2 - (\efun Y + \efun L)^2 - 2 \efun L \lb \efun D + \efun L + \efun X + \efun L - \efun Y - \efun L \rb \rsb
\end{align*}
We expand $L_{\eps}$ in powers of $\eps$ as 
\begin{equation}\label{eq:Leps}
L_\eps= \Le 0 +\eps \Le 1 + \eps^2 \Le 2+\eps^3 \Le 3,
\end{equation}
where the functions $L^{(j)}$ depend on $(\xi,\eta)$ and decay to zero at infinity. Substituting (\ref{Xeq}), (\ref{Uep}), (\ref{Yep}), (\ref{Dep}), and (\ref{eq:Leps}) into the left-hand and right-hand sides of equation (\ref{Feq}) yields the following equations in different powers of $\eps$ with their corresponding solutions:
\begin{align*}%\label{eq:Lesol}
& \mathcal{O}(1) : \quad 0 = 2(c_1^*)^2 (2A - 4 \Le 0)\\
&\phantom{O(1): \p\xi\We 0 } \qquad \implies \Le 0 = \frac{1}{2} A\\
& \mathcal{O}(\eps) : \quad (c_1^*)^2 \p \xi A = 2(c_1^*)^2 (2 \p \xi^{-1} \p \eta A - 4 \Le 1) \\
&\phantom{O(\eps): \p\xi\We 0 } \qquad  \implies \Le 1 = \frac{1}{2} \p \xi^{-1} \p \eta A - \frac{1}{8} \p \xi A\\
& \mathcal{O}(\eps^2) : \quad 0 = 2(c_1^*)^2 (- 4 \Le 2)  \\
&\phantom{O(\eps^2): \p\xi\We 0 } \qquad  \implies \Le 2 = 0 \\
&  \mathcal{O}(\eps^3) : \quad -2c_1^*\p\tau A - \frac{(c_1^*)^2}{24} \p\xi^3 A + \frac{(c_1^*)^2}{2}\p\xi \p\eta A = 2 (c_1^*)^2 ( \p \xi^{-1} \p \eta^2 A + \frac{1}{6} \p \xi \p \eta A - 4 \Le 3) + \alpha_1 A \p\xi A. 
\end{align*}
By using the KPII equation (\ref{KPIIdiag}), we eliminate 
\begin{align*}
2c_1^* \p\tau A + \alpha_1 A \p \xi A = -\frac{1}{48} (c_1^*)^2 \p \xi^3 A - (c_1^*)^2 \p \xi^{-1} \p \eta^2 A
\end{align*}
and obtain from the equation at the order of $\mathcal{O}(\eps^3)$ that 
$$
\Le 3 =  \frac 1 8 \pxi 1 \p \eta^2 A 
+\frac 1 {384}\p\xi^3 A 
-\frac 1 {48}\p\xi \p\eta A.
$$
Substituting this expansion into (\ref{eq:Leps}) yields 
yields the expansion
\begin{equation}
\label{Lep}
L_{\eps} = \frac{1}{2} A + \eps \left(\frac{1}{2} \p \xi^{-1} \p \eta A - \frac{1}{8} \p \xi A \right) + \eps^3 \left( \frac 1 8 \pxi 1 \p \eta^2 A 
+\frac 1 {384}\p\xi^3 A 
-\frac 1 {48}\p\xi \p\eta A\right).
\end{equation}
By using (\ref{Xeq}), (\ref{Yep}), and (\ref{Dep}), we also obtain 
\begin{align}
\label{Xep-2}
X_{\eps} &= \frac{1}{2} A - \eps \left(\frac{1}{2} \p \xi^{-1} \p \eta A - \frac{1}{8} \p \xi A \right) - \eps^3 \left( \frac 1 8 \pxi 1 \p \eta^2 A 
+\frac 1 {384}\p\xi^3 A 
-\frac 1 {48}\p\xi \p\eta A \right),\\
\nonumber
Y_{\eps} &= -\frac{1}{2} A + \eps \left(\frac{1}{2} \p \xi^{-1} \p \eta A + \frac{1}{8} \p \xi A \right) - \eps^2 \left( \frac{1}{2} \partial_{\eta} A \right) \\
\label{Yep-2}
& \qquad \qquad + \eps^3 \left( \frac 3 8 \pxi 1 \p \eta^2 A 
- \frac 1 {384}\p\xi^3 A 
+\frac 5 {48}\p\xi \p\eta A \right), \\
\nonumber
D_{\eps} &= \frac{1}{2} A + \eps \left(\frac{1}{2} \p \xi^{-1} \p \eta A + \frac{1}{8} \p \xi A \right) + \eps^2 \left( \frac{1}{2} \partial_{\eta} A \right) \\
\label{Dep-2}
& \qquad \qquad  + \eps^3 \left( \frac 3 8 \pxi 1 \p \eta^2 A - \frac 1 {384}\p\xi^3 A 
+\frac 5 {48}\p\xi \p\eta A \right),
\end{align}
Finally, substituting decompositions (\ref{Uep}), (\ref{Lep}), (\ref{Xep-2}), 
(\ref{Yep-2}), and (\ref{Dep-2}) into (\ref{Eeq}) gives the expansion:
\begin{align*}
& \eps (c_1^*)^2 \p \xi A - \frac{1}{2} \eps^2 (c_1^*)^2 \p \xi^2 A 
+ \eps^3 \left(-2 c_1^* \p \tau A + \frac{1}{12} (c_1^*)^2 \p \xi^3 A \right) 
+ \eps^4 c_1^* \p \xi \p \tau A \\
& = \eps (c_1^*)^2 \p \xi A - \frac{1}{2} \eps^2 (c_1^*)^2 \p \xi^2 A 
+ \eps^3 \left(\frac{5}{48} (c_1^*)^2 \p \xi^3 A + (c_1^*)^2 \p \xi^{-1} \p \eta^2 A + \alpha_1 A \p \xi A \right) \\
& \qquad 
+ \eps^4 \left( -\frac{1}{96} (c_1^*)^2 \p \xi^4 A - \frac{1}{2} (c_1^*)^2 \p \eta^2 A - \frac{1}{2} \alpha_1 \p \xi (A \p \xi A) \right), 
\end{align*}
which is satisfied up to the formal order of $\mathcal{O}(\eps^5)$ if $A$ is a suitable solution of the KP-II equation (\ref{KPIIdiag}).

\subsection{Step 2. Residual terms} 

Plugging the decomposition \eqref{diagdecom} into equations of motion \eqref{diagaccx} gives  the following equations for the error terms:
\begin{align}
\begin{split}
\dot L_{m,n}&=V_{m,n}-U_{m,n},\\
\dot D_{m,n}&= U_{m+1,n+1}-V_{m,n}+Res^{D}_{m,n},\\
\dot X_{m,n}&= U_{m+1,n}-V_{m,n}+Res^{X}_{m,n},\\
\dot Y_{m,n}&= U_{m,n+1}-V_{m,n}+Res^{Y}_{m,n}, \\
\dot U_{m,n}&= c_1^2 \lb L_{m,n}- D_{m-1,n-1} - X_{m-1, n} -  Y_{m,n-1}\rb\\
& \qquad +2\alpha_1 \eps^2 \lsb L_{m,n}^2-D_{m-1,n-1}^2-X_{m-1,n}^2 + Y_{m,n-1}^2\rsb \\
& \qquad +4\alpha_1 \eps^2 \lsb L_{m,n} \efun L \lb\xi,\eta\rb-D_{m-1,n-1} \efun D\lb\xi-\eps,\eta-\eps^2\rb\rsb \\
& \qquad +4\alpha_1\eps^2 \lsb Y_{m,n-1} \efun Y\lb\xi,\eta-\eps^2\rb-X_{m-1, n} \efun X \lb\xi-\eps,\eta\rb\rsb\\
& \qquad +Res^U_{m,n},\\
\dot V_{m,n} &= c_1^2 \lb D_{m,n} -L_{m,n}+ X_{m, n} + Y_{m,n}\rb\\
& \qquad +2\alpha_1 \eps^2 \lsb D_{m,n}^2-L_{m,n}^2+X_{m,n}^2-Y_{m,n}^2\rsb \\
& \qquad +4\alpha_1 \eps^2 \lsb D_{m,n} \efun D -L_{m,n} \efun L + X_{m, n} \efun X -Y_{m,n} \efun Y \rsb+Res^V_{m,n},
\end{split}
\label{dfpu}
\end{align}
where 
\begin{align*}
Res^{D}_{m,n} &:= \efun U(\xi+\eps,\eta+\eps^2) - \efun V(\xi,\eta) + \eps c_1^* \p \xi \efun D - \eps^3 \p \tau \efun D, \\
Res^{X}_{m,n} &:= \efun U(\xi+\eps,\eta) - \efun V(\xi,\eta) + \eps c_1^* \p \xi \efun X - \eps^3 \p \tau \efun X, \\
Res^{Y}_{m,n} &:= \efun U(\xi,\eta+\eps^2) - \efun V(\xi,\eta) + \eps c_1^* \p \xi \efun Y - \eps^3 \p \tau \efun Y, 
\end{align*}
and the residuals $Res^U$ and $Res^V$ are computed from the residual terms 
of equations (\ref{Eeq}) and (\ref{Feq}). Similarly to Lemma \ref{lres}, 
the residual terms are controlled in the $\ell^2(\mathbb{Z}^2)$ norm 
if $A$ is a smooth solution of the KP-II equation (\ref{KPIIdiag}). 
This estimate is summarized in the following lemma, which we give without proof. 

\begin{lemma}\label{diag-residual}
	Let $A \in C^0(\mathbb{R},H^s)$ be a solution to the KP-II equation (\ref{KPIIdiag}) with $s \geq 9$. There is a positive constant $C$ that depend on $A$ such that for all $\eps \in (0,1]$, we have
	\begin{align}
	\begin{split}
	\elt{Res^{D}}+	\elt{Res^{X}}+	\elt{Res^{Y}}+\elt{Res^{U}}+	\elt{Res^{V}}\leq C \eps^{\frac 7 2}.
	\end{split}
	\end{align}
\end{lemma}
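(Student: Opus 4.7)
The proof will follow the same strategy as the proof of Lemma \ref{lres} for the horizontal case, so I will only sketch the adaptations. By construction of the expansions \eqref{Uep}, \eqref{Lep}, \eqref{Xep-2}, \eqref{Yep-2}, \eqref{Dep-2} in Step 1, together with the KP-II equation \eqref{KPIIdiag}, every term in $Res^D$, $Res^X$, $Res^Y$, $Res^U$, $Res^V$ of formal order below $\mathcal{O}(\eps^5)$ vanishes identically. The first task is therefore to write each residual as an explicit linear combination of (integrals of) derivatives of $A$ at shifted arguments $(\xi+\theta\eps,\eta+\theta'\eps^2)$ via Taylor's theorem with integral remainder, exactly as was done term by term in the proof of Lemma \ref{lres}.

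Next I would estimate each such term in the $\ell^2(\mathbb{Z}^2)$ norm via Lemma \ref{lmell2}. Since that lemma costs one factor of $\eps^{-3/2}$ when converting from $H^s(\R^2)$ to $\ell^2(\Z^2)$, an $\mathcal{O}(\eps^5)$ pointwise term yields an $\mathcal{O}(\eps^{7/2})$ $\ell^2$ bound provided the relevant $H^s$ norm of $A$ (with $s>1$) is finite. The shifted arguments $(\xi+\theta\eps,\eta+\theta'\eps^2)$ cause no issue: translation is an isometry of $H^s(\R^2)$, and after taking the supremum in the Taylor remainder one estimates by the $H^s$ norm at a single (translated) argument. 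For $Res^D$, $Res^X$, $Res^Y$ the required terms involve derivatives of $U_\eps$ and of $L_\eps, D_\eps, X_\eps, Y_\eps$; after substituting the expansions \eqref{Uep}, \eqref{Lep}, \eqref{Xep-2}, \eqref{Yep-2}, \eqref{Dep-2} these become bounded linear combinations of $\p \xi^{\alpha}\p\eta^{\beta}\p\tau^{\gamma}A$ and their antiderivative counterparts $\p \xi^{-1}\p\eta A$, $\p\xi^{-1}\p\eta^2 A$.

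The main obstacle, as in the horizontal case, is controlling the $H^s$ norms of the nonlocal antiderivative terms and the time derivatives that appear after expanding $Res^U$ and $Res^V$. Here one must, in analogy with \eqref{A-der}, use the KP-II equation \eqref{KPIIdiag} to rewrite $\p\xi^{-1}\p\tau^2 A$ in terms of local quantities and $\p\tau D$ with $D=\p\xi^{-2}\p\eta^2 A$, and then invoke Lemma \ref{welpos} to guarantee that $A\in \solsp{0}{\tau_0}{s+9}$, $\p\tau A\in\solsp{0}{\tau_0}{s+6}$, $\p\tau^2 A\in\solsp{0}{\tau_0}{s+3}$, $\p\xi^{-1}\p\eta A$ and $\p\xi^{-2}\p\eta^2 A$ in suitable Sobolev classes, so that every term produced by the Taylor expansion is bounded uniformly in $\tau\in[-\tau_0,\tau_0]$. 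Because the assumption $s\geq 9$ in the KP-II solution precisely matches the worst-case loss of derivatives across the expansions (including three losses from $\p\xi^{-2}\p\eta^2$ and up to five additional derivatives from Taylor remainders and the quadratic terms $A\p\xi A$ and $L_\eps^2,\dots,Y_\eps^2$), this step closes.

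Finally, combining the bounds for the five residuals and absorbing all $H^{s+9}$-norms of $A$ and its antiderivatives into a single constant $C$ depending only on $A$ yields the claimed estimate $\elt{Res^D}+\elt{Res^X}+\elt{Res^Y}+\elt{Res^U}+\elt{Res^V}\leq C\eps^{7/2}$ uniformly for $\eps\in(0,1]$. The only difference from Lemma \ref{lres} is bookkeeping: the diagonal decomposition produces additional cross terms (e.g.\ from the shift $(\xi+\eps,\eta+\eps^2)$ in $Res^D$ and from the two quadratic residuals in \eqref{Eeq}--\eqref{Feq}), but each is of the same formal order $\mathcal{O}(\eps^5)$ and is handled by the same Lemma \ref{lmell2} / Lemma \ref{welpos} mechanism.
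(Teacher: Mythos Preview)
Your proposal is correct and matches the paper's own treatment: the paper states this lemma \emph{without proof}, noting only that ``similarly to Lemma~\ref{lres}, the residual terms are controlled in the $\ell^2(\mathbb{Z}^2)$ norm if $A$ is a smooth solution of the KP-II equation~(\ref{KPIIdiag}).'' Your sketch---Taylor remainders at $\mathcal{O}(\eps^5)$, conversion via Lemma~\ref{lmell2}, and control of antiderivative and time-derivative terms via Lemma~\ref{welpos}---is exactly the intended argument.
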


\subsection{Step 3. Energy estimates}

In order to control the growth of the approximation error from solutions to system \eqref{dfpu}, we introduce the following energy function,
\begin{align}\label{E-diag}
\begin{split}
E(t)=&\sum_{m,n} \frac 1 2 \lb U^2_{m,n}+V^2_{m,n}\rb+\frac{1}{2} c_1^2 
\lb L^2_{m,n}+D^2_{m,n}+X^2_{m,n}+Y^2_{m,n}\rb\\
&+\sum_{m,n} 2\alpha_1 \eps^2 \lb L^2_{m,n}\efun L+D^2_{m,n}\efun D+X^2_{m,n}\efun X-Y^2_{m,n}\efun Y\rb\\
&+\sum_{m,n} \frac 2 3 \alpha_1 \eps^2 \lb L^3_{m,n}+D^3_{m,n}+X^3_{m,n}-Y^3_{m,n}\rb.
\end{split}
\end{align}
Similarly to the proof of Lemma \ref{Eest}, the energy is coercive with respect to the $\ell^2$ norm of the perturbations if $\eps$ is sufficiently small and the perturbations are not large in the $\ell^2$ norm. The only difference between the expansions (\ref{Lep}), (\ref{Xep-2}), (\ref{Yep-2}), and (\ref{Dep-2}) 
from the expansion (\ref{Ue}) is that the former involve three derivatives of $A$ 
and one derivative of $\p \xi^{-1} \p \eta A$, whereas the latter involves 
two derivatives of $A$ and one derivative of $\p \xi^{-1} \p \eta A$. 
This modifies the statement of the following lemma, which we give without proof.

\begin{lemma}\label{diag-coer}
		Let $A \in C^0\lb \lsb -\tau_0, \tau_0\rsb,H^{s+1}\lb\R^2\rb\rb$ 
	and $\p \xi^{-1} A \in C^0\lb \lsb -\tau_0, \tau_0\rsb,H^{s}\lb\R^2\rb\rb$ with $s > 3$ and assume that $E(t) \leq E_0$ for some $\eps$-independent constant $E_0 > 0$ for every $t \in [-\tau_0 \eps^{-3}, \tau_0 \eps^{-3}]$. 
	There exists some constants $\eps_0>0$ and $K_0 >0$ that depend on $A$ such that 
	\begin{equation}
	\label{energy-coercive-diag}
	\elt U^2+\elt V^2+ \elt{L}^2+ \elt{D}^2+ \elt{X}^2+ \elt{Y}^2 \leq 2 K_0 E(t),
	\end{equation}
	for each $\eps\in\lb 0,\eps_0\rb$ and $t\in [-\tau_0 \eps^{-3}, \tau_0 \eps^{-3}]$.
\end{lemma}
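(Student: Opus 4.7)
The plan is to mimic the proof of Lemma \ref{Eest} almost verbatim, with the role of $(A,U_\eps)$ played now by the six $\eps$-dependent profiles $(L_\eps,D_\eps,X_\eps,Y_\eps)$ appearing in the cubic terms of $E(t)$. First I would observe that the expansions \eqref{Lep}, \eqref{Xep-2}, \eqref{Yep-2}, \eqref{Dep-2} express each of these profiles as $A$ plus terms of the form $\p \xi^{-1}\p \eta A$, $\p \xi A$, $\p \eta A$, $\p \xi^{-1}\p \eta^2 A$, $\p \xi^3 A$, and $\p \xi \p \eta A$. Under the hypothesis $A\in C^0([-\tau_0,\tau_0],H^{s+1})$ and $\p \xi^{-1}A\in C^0([-\tau_0,\tau_0],H^s)$ with $s>3$, Sobolev embedding $H^{s'}(\R^2)\hookrightarrow L^\infty(\R^2)$ for $s'>1$ controls each of these quantities in $L^\infty$, yielding an $\eps$-independent constant $C_0>0$ (depending on $A$) such that
\[
\sup_{t\in[-\tau_0\eps^{-3},\tau_0\eps^{-3}]} \lb \Li{L_\eps}+\Li{D_\eps}+\Li{X_\eps}+\Li{Y_\eps}\rb \leq C_0
\]
for all $\eps\in(0,1]$.

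Next I would bound the indefinite cubic block in \eqref{E-diag} from below. Writing $|L_{m,n}^2 L_\eps|\leq C_0 L_{m,n}^2$ (and similarly for the $D,X,Y$ terms), and using $|L_{m,n}^3|\leq \Li{L}L_{m,n}^2 \leq \elt{L}\,L_{m,n}^2$ (and likewise for $D,X,Y$), I obtain
\begin{align*}
2E(t)\geq & \elt U^2+\elt V^2 \\
& +\lb c_1^2-4\alpha_1\eps^2 C_0-\tfrac{4}{3}\alpha_1\eps^2 \elt{L}\rb\elt{L}^2 \\
& +\lb c_1^2-4\alpha_1\eps^2 C_0-\tfrac{4}{3}\alpha_1\eps^2 \elt{D}\rb\elt{D}^2 \\
& +\lb c_1^2-4\alpha_1\eps^2 C_0-\tfrac{4}{3}\alpha_1\eps^2 \elt{X}\rb\elt{X}^2 \\
& +\lb c_1^2-4\alpha_1\eps^2 C_0-\tfrac{4}{3}\alpha_1\eps^2 \elt{Y}\rb\elt{Y}^2.
\end{align*}

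Finally, using the a priori bound $E(t)\leq E_0$, I would first pick a tentative constant $K_0>0$ and then choose $\eps_0>0$ sufficiently small so that
\[
c_1^2-4\alpha_1\eps_0^2 C_0-\tfrac{4}{3}\alpha_1\eps_0^2 (2K_0 E_0)^{1/2}\geq \min(1,K_0^{-1}),
\]
which is possible for any fixed $E_0$ by taking $\eps_0$ small enough. Substituting the corresponding bounds on $\elt L,\elt D,\elt X,\elt Y$ (obtained via a standard continuation/bootstrap argument from the assumption $E(t)\leq E_0$) yields the coercivity estimate \eqref{energy-coercive-diag} with $K_0$ redefined as $\max(1,K_0)$.

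The only real subtlety, and the place where I expect the main bookkeeping to be, is the self-referential character of the estimate: the coefficients multiplying $\elt{L}^2,\elt{D}^2,\elt{X}^2,\elt{Y}^2$ themselves involve the $\ell^2$ norms of the perturbations. This is handled exactly as in Lemma \ref{Eest} by invoking the a priori bound $E(t)\leq E_0$ before the coercivity is established, which is legitimate in the downstream Gronwall argument since that bound is maintained by continuity so long as $\eps_0$ is chosen small enough at the outset. No new analytic input beyond Sobolev embedding and the expansions derived in Step 1 is required.
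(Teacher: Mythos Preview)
Your proposal is correct and follows exactly the approach the paper intends: the authors omit the proof of Lemma~\ref{diag-coer} and explicitly point to the proof of Lemma~\ref{Eest}, noting only that the profiles $L_\eps,D_\eps,X_\eps,Y_\eps$ now involve one extra $\xi$-derivative of $A$ (hence the hypothesis $A\in H^{s+1}$ rather than $H^s$). Your derivation of the $L^\infty$ bound on the four profiles via Sobolev embedding, the lower bound on $2E(t)$ with the same structure as in Lemma~\ref{Eest}, and the bootstrap choice of $\eps_0$ all match the template verbatim.
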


Finally, the growth of the energy \eqref{E-diag} is estimated from 
the balance equation:
\begin{align}
\begin{split}
E'(t)= &\sum_{m,n} U_{m,n}Res^U_{m,n}+V_{m,n}Res^V_{m,n}\\
&+c_1^2\sum_{m,n}  D_{m,n}Res^D_{m,n}+X_{m,n}Res^X_{m,n}+Y_{m,n}Res^Y_{m,n}\\
&+4\alpha_1 \eps^2 \sum_{m,n}   D_{m,n}^2Res^D_{m,n}+X_{m,n}^2Res^X_{m,n} -Y_{m,n}^2Res^Y_{m,n}\\
&+4\alpha_1 \eps^2 \sum_{m,n}  D_{m,n} \efun D Res^D_{m,n}
 + X_{m,n} \efun X Res^X_{m,n} - Y_{m,n} \efun Y Res^Y_{m,n} \\
&+2\alpha_1 \eps^2 \sum_{m,n}  L_{m,n}^2\efun {\dot L}+D_{m,n}^2\efun {\dot D}+X_{m,n}^2\efun {\dot X}-Y_{m,n}^2\efun {\dot Y}, 
\end{split}
\end{align}
where the dot denotes the derivative in $t$ of the function of 
$\xi = \eps (m - c_1^*t)$, $\eta = \eps^2 n$, and $\tau = \eps^3 t$.
In view of Lemmas \ref{diag-residual} and \ref{diag-coer}, 
similar to the proof of Lemma \ref{lemma-growth}, we can obtain a bound 
on the growth of the energy. This bound is summarized in the following 
lemma, which we give without proof. 

\begin{lemma}
	\label{lemma-growth-diag}
	Let $A \in C^0\lb \lsb -\tau_0, \tau_0\rsb,H^{s+9}\lb\R^2\rb\rb$ be a solution to the KP-II equation \eqref{KPIIdiag} with $s \geq 0$ in the class of functions of Lemma \ref{KPIIwellpos} and assume that $E(t) \leq E_0$ for some $\eps$-independent constant $E_0 > 0$ for every $t \in [-\tau_0 \eps^{-3},\tau_0 \eps^{-3}]$. There exist some constants $\eps_0 > 0$ and $K_0 > 0$ that depend on $A$ such that 
	\begin{equation}
	\label{energy-balance-diag}
	\left|E'(t)\right|\leq K_0 \lb \eps^{\frac 7 2} E(t)^{\frac 1 2}  + \eps^3 E(t) \rb,
	\end{equation}	
	for each $\eps \in (0,\eps_0)$ and $t \in [-\tau_0 \eps^{-3},\tau_0 \eps^{-3}]$.
\end{lemma}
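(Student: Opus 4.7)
The plan is to mimic the four-step structure already used for Lemma \ref{lemma-growth}, adapted to the diagonal geometry where we now have four strain quantities $L_{m,n}, D_{m,n}, X_{m,n}, Y_{m,n}$ and two velocity quantities $U_{m,n}, V_{m,n}$, together with their $\eps$-dependent backgrounds $L_\eps, D_\eps, X_\eps, Y_\eps, U_\eps, V_\eps$ depending on $(\xi,\eta,\tau)=(\eps(m-c_1^*t),\eps^2 n,\eps^3 t)$. First I would differentiate the energy \eqref{E-diag} in $t$, applying the chain rule to every occurrence of $L_\eps, D_\eps, X_\eps, Y_\eps$ and writing $\dot{f}_\eps = -\eps c_1^* \p \xi f_\eps + \eps^3 \p \tau f_\eps$, so that derivatives hitting the background produce an explicit factor of $\eps$. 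Time derivatives of the lattice unknowns are then replaced by the right-hand sides of \eqref{dfpu}.

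Next I would carry out a discrete summation by parts on $(m,n)\in\Z^2$. The quadratic part of $E(t)$ is the natural energy for the linearization of \eqref{diagaccx}, so all $c_1^2$-terms telescope and cancel. The key observation, exactly parallel to the horizontal case, is that the cubic $\eps^2$-corrections on the second and third lines of \eqref{E-diag} have been chosen precisely so that their contribution to $E'(t)$ cancels the $\mathcal O(\eps^2)$ nonlinear terms $2\alpha_1\eps^2(L^2-D^2-X^2+Y^2)$ and $4\alpha_1\eps^2(L L_\eps - D D_\eps - X X_\eps + Y Y_\eps)$ that come from $\dot U_{m,n}$ and $\dot V_{m,n}$. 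Checking the sign pattern $(+,+,+,-)$ for $(L,D,X,Y)$ that persists consistently through all these expressions will be the main bookkeeping task, but it is dictated by the same structural reason as in the horizontal proof.

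After cancellation, $E'(t)$ reduces to a sum of: (i) residual–error pairings $\sum_{m,n}U_{m,n}Res^U_{m,n}$, $\sum_{m,n}V_{m,n}Res^V_{m,n}$, and the $c_1^2$-multiples of $D_{m,n}Res^D_{m,n}$ etc.; (ii) the $\eps^2$-weighted cubic pairings involving $Res^D, Res^X, Res^Y$ multiplied by $L,D,X,Y$ or $L_\eps,\dots,Y_\eps$; and (iii) terms in which the time derivative has fallen on a slowly varying background, each carrying an explicit extra factor of $\eps$ and multiplying $\eps^2\|\text{error}\|_{\ell^2}^2$. Applying the Cauchy--Schwarz inequality and $\|u\|_{\ell^\infty}\le\|u\|_{\ell^2}$, together with Sobolev embedding $H^{s}(\R^2)\hookrightarrow L^\infty(\R^2)$ for $s>1$ applied to the backgrounds (so that $\|L_\eps\|_{L^\infty}$, $\|D_\eps\|_{L^\infty}$, $\|\p\xi L_\eps\|_{L^\infty}$, etc.\ are bounded uniformly in $\eps$ by $H^{s+9}$-norms of $A$), one obtains a pointwise bound of the form
\[
|E'(t)|\le C\bigl(\|Res\|_{\ell^2}\,\|\text{error}\|_{\ell^2} + \eps^3 \|\text{error}\|_{\ell^2}^2\bigr).
\]
Lemma \ref{diag-residual} gives $\|Res\|_{\ell^2}\le C\eps^{7/2}$, and Lemma \ref{diag-coer} gives $\|\text{error}\|_{\ell^2}^2\le 2K_0 E(t)$, which together yield \eqref{energy-balance-diag}.

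The main obstacle is the step~(ii) bookkeeping: one has to verify that all $\mathcal O(1)$ and $\mathcal O(\eps^2)$ contributions in $E'(t)$ truly cancel. This requires carefully tracking the telescoping under discrete summation by parts (including the asymmetric index shifts in $\dot U_{m,n}$, which involves $D_{m-1,n-1}$, $X_{m-1,n}$, and $Y_{m,n-1}$, whereas $\dot V_{m,n}$ involves $D_{m,n}$, $X_{m,n}$, $Y_{m,n}$), and then matching the result to the derivative of the cubic terms $L^2 L_\eps + D^2 D_\eps + X^2 X_\eps - Y^2 Y_\eps$ and $\tfrac23(L^3+D^3+X^3-Y^3)$ in \eqref{E-diag}. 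Once that algebraic cancellation is confirmed, the remainder of the argument is routine and mirrors the horizontal case step for step.
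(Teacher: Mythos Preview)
Your proposal is correct and follows essentially the same approach as the paper: differentiate \eqref{E-diag}, substitute \eqref{dfpu}, perform discrete summation by parts so that the $\mathcal{O}(1)$ and $\mathcal{O}(\eps^2)$ contributions cancel, and then bound the remaining residual--error pairings and $\dot f_\eps$-terms via Cauchy--Schwarz together with Lemmas~\ref{diag-residual} and~\ref{diag-coer}. In fact the paper does not give a detailed proof of this lemma but simply records the resulting balance equation for $E'(t)$ (displayed just before the lemma) and invokes the same estimates you describe; your outline is a faithful expansion of that.
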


\subsection{Step 4. Bound on the approximation error}

Since the bounds (\ref{energy-balance}) and (\ref{energy-balance-diag}) coincide, application of the Gronwall's inequality in Lemma \ref{gronwall} gives the desired result of Theorem \ref{theorem-diag} exactly like 
in Step 4 of the proof of Theorem \ref{theorem-horiz}.

\section{Conclusion}
\label{sec-conclusion}

We have proved here the validity of the KP--II approximation for dynamics of transversely modulatied small-amplitude long-scale waves in a vector FPU system on a two-dimensional square lattice. The justification was performed for horizontal and vertical propagations of the waves and, under some restrictions on parameters of the FPU system, for the diagonal propagation. While the general algorithm of the justification analysis is well understood by now, 
the technical details of the justification analysis have been developed for the first time in the vector FPU systems, to the best of our knowledge.

This research opens up new directions. First, it is interesting to see 
if the justification analysis can be generalized for the vector FPU mass--spring systems with diagonal springs and for the wave propagation under an arbitrary angle with respect to the square lattice. Second, in terms of applications 
of the FPU models to the graphene materials, it is important to consider 
other two-dimensional models such as hexagonal lattices. Finally, known properties of the KP-II equation can be applied to study other problems of the nonlinear dynamics of small-amplitude waves in the two-dimensional FPU lattices 
such as the linear and nonlinear stability of periodic and solitary waves with respect to transverse modulations.

 \end{document}